\documentclass[12pt]{amsart}
\usepackage{latexsym,fancyhdr,amssymb,color,amsmath,amsthm,graphicx,listings,comment}
\usepackage[section]{placeins}
\pagestyle{fancy}
\newtheorem{thm}{Theorem} \newtheorem{lemma}{Lemma} \newtheorem{propo}{Proposition}
\setlength{\parindent}{0cm} 
\let\paragraph\subsection

\title{The Tree-Forest Ratio}
\fancyhead{}
\fancyhead[LO]{\fontsize{9}{9} \selectfont OLIVER KNILL}
\fancyhead[LE]{\fontsize{9}{9} \selectfont TREE FOREST RATIO}

\author{Oliver Knill}
\date{May 22, 2022}
\address{Department of Mathematics \\ Harvard University \\ Cambridge, MA, 02138 }
\subjclass{15A15, 16Kxx, 05C10, 57M15, 68R10, 05E45}

\keywords{Trees, Forests, Graphs, Barycentric refinement, Zeta functions, Gap labeling}

%  5/22/2022  wrapping up
%  5/19/2022  code addition, wrapping up
%  5/14/2022  youtube video outlining the proof 
%  3/25/2022  universality result
%  3/23/2022  exclusion inclusion
%  3/20/2022  barycentric refinement growth
%  3/19/2022  pick up again
%  1/12/2021  start 

\begin{document}
\maketitle

\begin{abstract}
The number of rooted spanning forests divided by the number of spanning rooted trees in a graph $G$ with
Kirchhoff matrix $K$ is the spectral quantity $\tau(G)={\rm det}(1+K)/{\rm det}(K)$ of $G$ 
by the matrix tree and matrix forest theorems. 
We prove that that under Barycentric refinements, the tree index $T(G)=\log({\rm det}(K))/|G|$ and
forest index $F(G)=\log({\rm det}(1+K))/|G|$ and so the tree-forest index $i=F-G=\log(\tau(G))/|G|$ converge
to numbers that only depend on the size of the maximal clique in the graph. 
In the one-dimensional case, all numbers are known: 
$T(G)=0, F(G)=i(G) =2 \log(\phi)$, where $\phi$ is the golden ratio. 
The convergent proof uses the Barycentral limit theorem assuring the Kirchhoff spectrum 
converges weakly to a measure $\mu$ on $[0,\infty)$ that only depends on dimension of $G$. 
Trees and forests indices are potential values $i = U(-1)-U(0)$ for the subharmonic function
$U(z)=\int_0^{\infty} \log|x-z| \; d\mu(x)$ 
defined by the Riesz measure $d\mu=\Delta U$ which only depends on the dimension of $G$.
The potential $U(z)$ is defined for all $z$ away from the support of $d\mu$ and finite 
at $z=0$. Convergence follows from the tail estimate $\mu[x,\infty] \leq C e^{-a_d x}$,
where the decay rate $a_d$ only depends on the maximal dimension. With the normalized zeta function 
$\zeta(s) = \frac{1}{|V(G)|} \sum_k \lambda_k^{-s}$, we have for all finite graphs of maximal dimension $\geq 2$
the identity $i(G) = \sum_{t=1}^{\infty} (-1)^{s+1} \zeta(s)/s$.
The limiting zeta function $\zeta(s) = \int_{0}^{\infty} x^{-s} d\mu(x)$ is analytic in $s$ for $s<0$. 
The Hurwitz spectral zeta function $\zeta_z(s)=U_s(z) = \int_0^{\infty} (x-z)^{-s} \; d\mu(x)$ complements
$U(z) = \int_0^{\infty} \log(x-z) \; d\mu(x)$ and is analytic for $z$ 
in $\mathbb{\mathbb{C}} \setminus \mathbb{R}^+$ and for fixed $z$ in $\mathbb{C} \setminus \mathbb{R}^+$
is an entire function in $s \in \mathbb{CC}$. 
\end{abstract}

\section{In a nutshell}

\paragraph{}
We define here the {\bf tree forest ratio} $\tau(G)$ of a connected finite simple graph $G=(V,E)$
as the number of rooted spanning forests in $G$ divided by the number of rooted spanning trees 
in $G$. By the tree and forest matrix theorems, this is
$\tau = {\rm Det}(K+1)/{\rm Det}(K)$, where ${\rm Det}$ is the pseudo determinant and 
$K$ the Kirchhoff Laplacian of $G$. In other words, it is
$\tau=\prod_{\lambda \neq 0} (1+1/\lambda)$, where $\lambda$
runs over the non-zero eigenvalues of $K$. 
An upper bound is $(1+\lambda_2)^{-1})^|V|$, where $\lambda_2$
is the {\bf ground state}, the smallest non-zero eigenvalue of $K$. 
An expansion of the logarithm and a basic bound on the product gives
$\log(1+\zeta(1)) \leq \log(\tau) = \sum_{s=1}^{\infty} (-1)^{s+1} \zeta(s-1)/s \leq \zeta(1)$, 
where $\zeta(s)$ is the Kirchhoff spectral zeta function of $G$. 

\paragraph{}
The same functional could be considered for other operators like the 
connection matrix $L$ or the Hodge matrix $H$ of a graph, even so a tree or forest interpretation lacks then.
For the connection matrix which has negative eigenvalues in general, we would use $L^2$ to have no ambiguity 
when defining zeta functions
As we have an equivalent zeta function expression in the Kirchhoff case, the function
$\tau(G)$ can now make sense even for manifolds $G$ which do not feature spanning trees or spanning forests.
The reason is that the Laplacian on the manifold defines a Minakshisundaram - Pleijel zeta function and 
a Ray-Singer determinant. For $M=\mathbb{T}=\mathbb{R}/\mathbb{Z}$, where $\zeta$ is the 
{\bf classical Riemann zeta function}, we have $\tau_G = \sinh(\pi)/\pi$. 

\paragraph{}
Potential theory comes in when seeing the 
logarithm of the {\bf tree number} as a potential value $V(0)=\log({\rm Det}(L))$. Also the 
logarithm of the {\bf forest number} $V(-1)=\log({\rm Det(1+L)})$ is a {\bf potential value} of the potential 
$V(z)=\log({\rm Det}(L-z I)) = \int \log(x-z) \; dk(x)$, where $dk=\sum_j \delta_{\lambda_j}$ 
is a finite pure point measure defined by the eigenvalues of $L$. When normalizing $dk$ to become a
probability measure $d\mu$, then $U(z) = \int \log(x-z) \; d\mu(x)$ has a subharmonic real part
and $d\mu$ is the {\bf Riesz measure} $d\mu= \Delta U$, a probability measure with support on 
the positive real axes. 
The Barycentral limit theorem assures that $d\mu_n$ converges in the Barycentric limit weakly
to a measure $d\mu$ which only depends on the maximal dimension of the initial graph. 
The potential value $U(z)$ is defined for all $z$ away from the support of $d\mu$ and finite  
at $z=0$ because it is there the limiting growth rate of trees which is boxed in by $0$ and 
the growth rate $U(-1)$ of forests.  In the $1$-dimensional case, it is the arc-sin 
distribution on $[0,4]$ which is the potential theoretical equilibrium measure on that interval. 

\begin{figure}[!htpb]
\scalebox{0.2}{\includegraphics{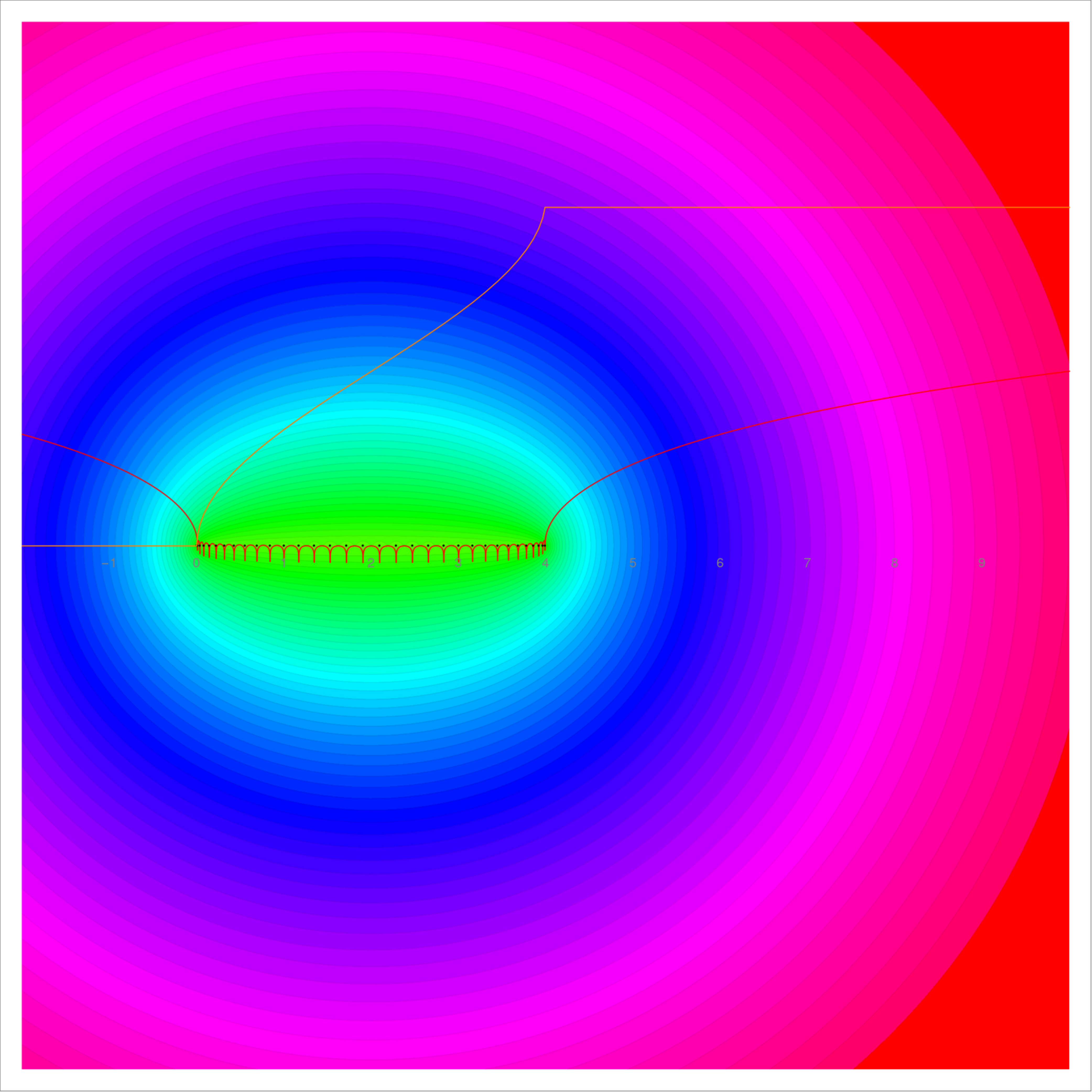}}
\scalebox{0.2}{\includegraphics{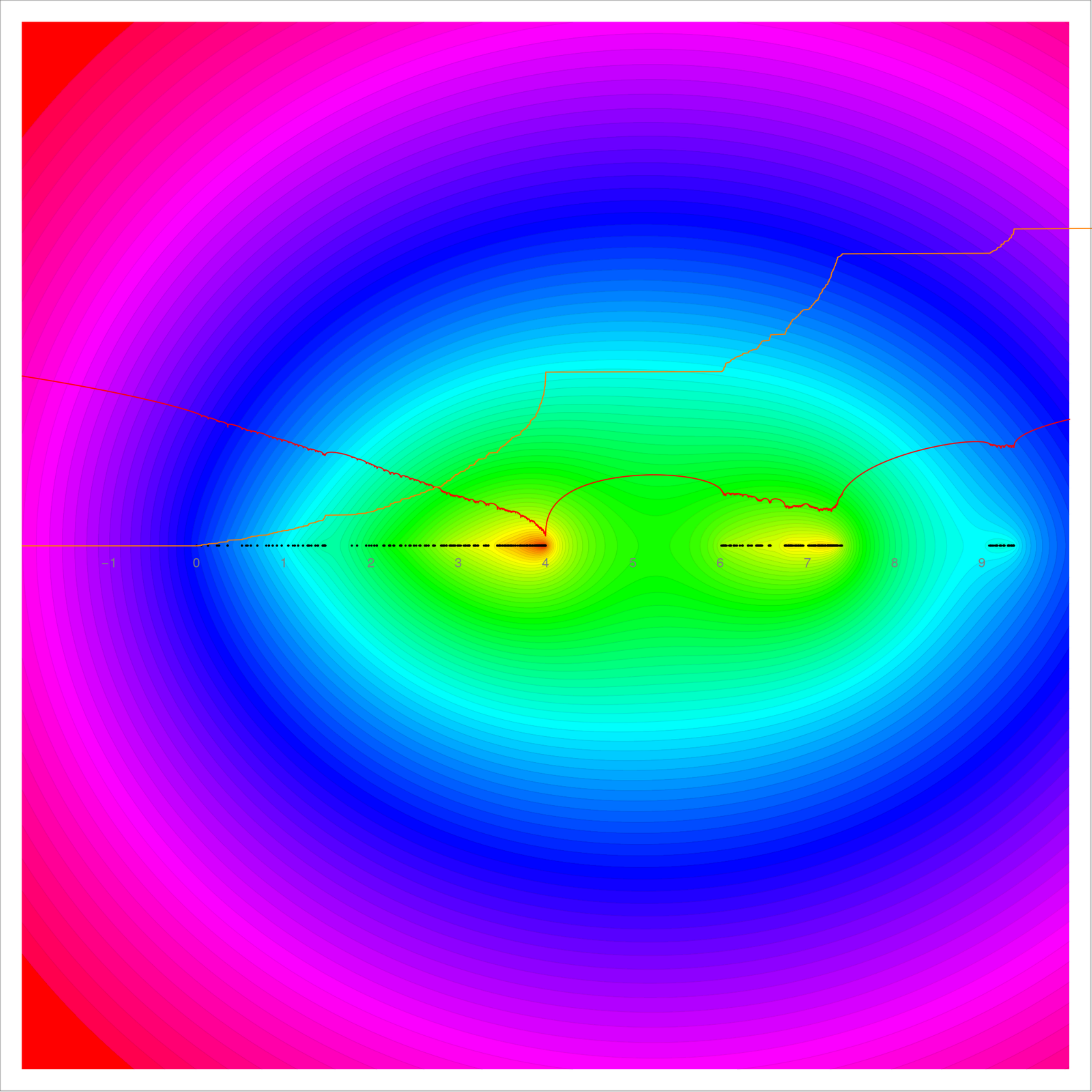}}
\label{potential}
\caption{
The real and imaginary part of the potential in the 1-dimensional case is seen to the left.
The support of the density of states is $[0,4]$. In the 2-dimensional case, where
the support of the density of states is non-compact, we see gaps and the support of the spectrum
could be a Cantor set. 
}
\end{figure}

\paragraph{}
A major point we want to make here is to see that the limiting universal measure $d\mu$ has 
an exponentially decaying {\bf tail distribution} $\mu[x,\infty) \to 0$ 
as $x \to \infty$. This implies that both the forest and tree numbers
converge in the Barycentral limit because the potential values $U(-1)$ and $U(0)$ exist. Since
by nature of tree and forest numbers $0 \leq U(0) \leq U(-1)$, the convergence at the bottom of the
spectrum $z=0$ is not a problem. The tail estimate also shows that for $z$ away from the 
positive real axes, the {\bf Hurwitz zeta function} 
$\zeta_z(s) = U_s(z) = \int_0^{infty} (x-z)^{-s} \; d\mu(x)$ is an entire
function in $s$. For $z=0$, we only know that $\zeta_0(s)=\zeta(s)$ is analytic for $s<0$.

\paragraph{}
We will use some linear algebra to estimate the integrated density of states near 
$\infty$ in terms of the vertex degree distribution of the graph. 
We need a bit more than the {\bf Schur-Horn inequality} or the 
{\bf Gershgorin circle theorem}: we use $\lambda_k \leq 2 d_k$, where $d_k$ is 
the $k$'th vertex degree. This appears to be a new 
result we have written down separately and which follows directly from the Cauchy 
interlace theorem. We have looked at the zeta function $\zeta(s)$ 
of circular graphs $G=C_n$ in \cite{KnillZeta} and looked at the limiting behavior of roots and 
then in \cite{DyadicRiemann} for the connection graphs of circular graphs, where the connection 
Laplacian is invertible \cite{Unimodularity,KnillEnergy2020} and the Zeta function satisfies 
a functional equation. 

\paragraph{}
For small graphs like complete graphs $K_n$ or the graph complements 
$C_n'$ of cyclic graphs $C_n$, the tree-forest ratio converges 
to the {\bf Euler constant} $e=2.718 \dots$ in the limit $n \to \infty$. 
For $K_n$, one can see that directly from the eigenvalues $\lambda_0=0,\lambda_k=n$ because
$\tau(G)=(1+1/n)^{n-1} \to e$. For Barycentric refinements of a graph $G=G_0$ 
the tree forest index $i(G_n)=\log(\tau(G_n))/|V(G_n)|$ of $G_n$ converges to a universal 
constant that only depends on $d$. Since $\sum_{k=0}^d f_k(G_{n-1})=|G_n|=f_d \leq \sum_{k=0}^d f_k(G_n)$
we could also take the limit, where $|V(G|$ is replaced by any $f_k$ like for example 
the volume $f_d$. The result is a direct consequence of {\bf spectral Barycentral limit theorem}
\cite{KnillBarycentric,KnillBarycentric2}. In the Barycentric limit, we have 
$\log(\tau(G_n))/|G_n| \to  \int_0^\infty \log(1+1/x) d\mu(x)$, where $\mu$ is the limiting
density of states. To show that this is a finite number, we only need to show that 
the density of states $dk$ decays fast. 
We needed something stronger than the Gershgorin circle theorem \cite{Gershgorin,GershgorinAndHisCircles}
or the Schur inequality in matrix theory \cite{Brouwer} and got $\lambda_k \leq d_k$. 
For $d=1$, we can compute this limiting constant as
$\lim_{n \to \infty} i(G_n) = 2 \log(\phi)$, where $\phi=(1+\sqrt{5})/2$ is the golden ratio.
When looking at Schur for the sum of the first $n$ eigenvalues and the 
Cheeger inequality for the ground state $\lambda_1$,
it is natural to conjecture that the {\bf linear graph} with $n$ vertices and $(n-1)$ edges
strictly maximizes $\tau(G)$ among all connected graphs with $n$ vertices. 

\paragraph{}
Having made more experiments with the limiting density of states in the case $d=2$ in particular,
it is numerically indicative that $[4,6]$ is the {\bf largest gap} in the Barycentric spectral 
measure for $d=2$. If we take a triangle $G=K_3$ and make Barycentric refinements, then after $2$ or more
Barycentric refinements, exactly half of the positive eigenvalues are in the interval $[0,4]$ and half of the
positive eigenvalues are in $[6,\infty)$. It is natural to ask whether there is a {\bf gap labeling}: the {\bf integrated
density of states} $\int_0^x d\mu(x)$ is a rational number for $x$ in a gap of the spectrum. But already in the case
$d=2$ for the relatively large gap containing the point $8$, we don't see an obvious small rational number attached. Our best guess 
for the integrated density of states is $\int_0^8 dk(x) = 137/164$. 
Gap labeling results are prototyped by p-periodic Jacobi matrices, where the integrated density of states is $j/p$ with integer
$j$. For classes of non-periodic operators like almost periodic operators, the gap labeling works too. 
For early sources,see \cite{Pastur,Cycon,Carmona,Bel+92a}. For a recent development interesting for graph theory is the case 
of periodic Jacobi matrices on universal covers of leaf-free one-dimensional graphs \cite{AvniBreuerSimon}, 
where one observes only point or absolutely continuous spectrum. In that respect, we have absolutely no idea yet what
the spectral type of the Barycentric density of states $dk$ is in dimension $d \geq 2$. Observing that the 
potential $U(z)$ (the analogue of the Lyapunov exponent in the case of Schr\"odinger operators) appears to be positive 
almost everywhere on the support of $dk$ if the dimension is larger than $1$. 
% l=Eigenvalues[1.0*KirchhoffMatrix[Barycentric[CompleteGraph[3],5]]]; n=Length[l]; Sum[If[l[[k]]<8,1,0],{k,n}]/(n-1)

\paragraph{}
In the case of Jacobi matrices, one is interested in the spectral measures of random operators $L(\omega)$ parametrized by 
a point in a probability space $\Omega$ which in the almost periodic case is a compact topological group with Haar measure.
We believe that there is a non-Abelian compact topological group in all dimensions. It is hidden for $d \geq 2$ still.
There should be Laplacian on this group such that $dk_d$ is the
density of states of a ``random" Laplacian. In the case $d=1$, where things are Abelian and understood,
the group $\mathcal{G}_1$ is the {\bf dyadic group of integers} and the limiting
operator is the extension of $H f(n) = 2-f(n+1)-f(n-1)$ from the dense set $\mathbb{Z}$ to its completion $\mathcal{G}_1$. 
By Fourier theory, $H$ is diagonalized and unitarily equivalent to 
$\hat{H} f(x) = 2-2\cos(\pi x) = 4\sin^2(\pi x/2)$ on the {\bf Pr\"ufer group} $\hat{\mathcal{G}}_1$ of all dyadic rationals on $\mathbb{T}$.
While $dk=1/(\pi \sqrt{4(4-x)})$ on $[0,4]$ is clearly absolutely continuous, the operator $H$ has dense pure point spectrum as
an operator on $L^2(\mathcal{G}_1)$. Unlike in the Pontryagin pair $\mathbb{Z} \leftrightarrow \mathbb{T}$ where the Dirac points
$\delta_x$ in $\mathbb{T}$ are distributions and not functions in in $L^2(\mathbb{T})$, the Dirac points 
$\delta_x in l^2(\hat{\mathcal{G}}_1)$ are actual functions as the Pr\"ufer group is discrete. In the Jacobi case, the operator
was diagonal on the compact, non-discrete group of the Pontryagin pair, in the dyadic case, the operator is diagonal on 
the discrete-non-compact case. 

\section{Introduction}

\paragraph{}
Investigating the relation between spectral data of a geometry and observable quantities
like topological invariants is a classical theme in geometry \cite{BergerPanorama}.
It was popularized by Mark Kac whether one can ``hear the geometry" 
\cite{Kac66}. Historically, a prototype result is {\bf Weyl's law} \cite{Weyl1911} relating the number of Dirichlet 
eigenvalues $n$ of the Dirichlet Laplacian on $G$ below $\lambda$ with the volume $|G|$ of a domain $G$ as 
$n \sim C_d \lambda^{\frac{d}{2}} |G|$ with $C_d = \frac{|B_d|}{2\pi^d}$.
Herman Weyl looked in \cite{Weyl1911} at the planar case and proved $n/\lambda_n = |G|/(4\pi)$ 
using the explicit eigenvalues $\lambda_{m,n}=\pi^2(m^2+n^2)/|G|$ for a square $G$. Kac's question addresses
the problem to what degree the spectral data determine space and how large the set of objects
is that are isospectral. Because the zeta function encodes the spectrum, this also means
to what degree the spectral zeta function determines the geometry.

\paragraph{}
Spectral questions are also related to inverse problems like to find the isospectral set in classes
of geometries.  While in one-dimensional cases, one can deform geometry in an isospectral way, 
in higher dimensions, isospectral rigidity appears. Isospectral sets tend to be discrete
as isospectral deformations of Riemannian geometries in general do not exist any more
\cite{Mor1}. For finite simple graphs, the question of how spectral data relate to
a given invariant is largely parallel to the continuum theory \cite{Chung97} and 
can be investigated experimentally. A typical spectral result is that the pseudo
determinant ${\rm Det}(K)$ of the Kirchhoff Laplacian of a graph is the number of rooted 
spanning trees and the determinant of the Fredholm modification is the number of rooted 
spanning forests. We look here at this ratio. In this context, we can also remind 
about the spectral properties of connection matrices, where the 
number of positive eigenvalues minus the number of negative eigenvalues is the Euler characteristic
\cite{HearingEulerCharacteristic}.

\paragraph{}
The {\bf tree forest ratio} is picked up here also because it belongs to a larger quest 
to investigate {\bf functionals on graphs}, that is assigning numerical values to a graph.
Considering critical points of functionals on geometries has 
been successful in physics, like for geodesics, area minimizing surfaces, total curvature minimizing
In graph theory, one can look at ``packing numbers" like the chromatic number or the independence number
manifolds. In graph theory, quantities that can be
counted are the average simplex cardinality \cite{AverageSimplexCardinality}, 
the number $f_k$ of $k$-dimensional simplices or the 
Euler characteristic$ \chi(G)=\sum_k (-1)^k f_k$ or other length-related functionals
\cite{KnillFunctional}. Interesting examples of {\bf spectral quantities} is the ground state energy, 
the smallest positive eigenvalue of the Laplacian or then the {\bf analytic torsion} 
which involves spectral data of the Hodge Laplacian $L=(d+d^*)^2$. 

\section{Trees and Forests}

\paragraph{}
Given a {\bf finite simple graph} $G$, the collection of {\bf rooted spanning trees} and 
the set of {\bf rooted spanning forests} inside $G$ are of interest. For forests, 
``rooted" means that every tree in the forest has an assigned vertex, which is interpreted 
as the {\bf root} of the tree. 
Rooted trees in a forest include also seeds, which are one-point graphs. Each rooted tree can be seen
as pointed topological space without closed loops nor triangles. It is a directory organization of 
the vertices and a stratification of space by giving the distance from the root. 
For any spanning tree $T$ or spanning forest, the induced graph is the full space $G$
because $T$ spans $G$: the vertex set of $T$ and $G$ is the same. The ratio of
rooted spanning forests and trees is the same than the ratio of spanning forests and trees.  

\paragraph{}
By the {\bf Kirchhoff matrix tree theorem}, the
number of rooted spanning trees in $G$ is ${\rm Det}(K)$, where $K$ is 
the {\bf Kirchhoff matrix} of $G$ and ${\rm Det}$ is the {\bf pseudo determinant} of $K$,
the product of the non-zero eigenvalues of $K$. Since every spanning tree has
$n$ possible roots if the graph has $n$ vertices, the number ${\rm Det}(K)/n$
is the number of spanning trees in $G$. 

\paragraph{}
By the {\bf Chebotarev-Schamis forest theorem} 
\cite{ChebotarevShamis1,ChebotarevShamis2,Knillforest}, the number of rooted spanning
forests in $G$ is ${\rm det}(1+K)$, the {\bf Fredholm determinant} of the 
Kirchhoff matrix $K$. See \cite{cauchybinet}. 
We introduce here the {\bf tree-forest ratio}
$$    \tau(G) = \frac{{\rm det}(1+K)}{{\rm Det}(K)} $$
and the {\bf tree forest index}
$$    i(G) = \log(\tau(G))/|G|  \;  $$
where $|G|$ is the number of vertices. 
In general $\tau(G) \geq 0$ because there are more forests than trees.

\paragraph{}
In dimension larger than $1$, both the number of forests as well as the number of trees grow 
exponentially when doing Barycentric refinements.
The reason is that in the two dimensional case already, we can see this geometrically.
This is different for one-dimensional graphs, where the number of trees grows polynomially
under Barycentric subdivisions while the number of forests 
grows exponentially. The precise polynomial 
degree of the tree growth rate in the one-dimensional case depends
on the genus $b_1$ of the graph. 

\paragraph{}
For graphs with multiple connected components, where spanning trees naturally
need to be disconnected, one can define the number of rooted spanning trees as the product 
$\prod_{k} {\rm Det}(K_k)$, where $K_k$ is the Kirchhoff
matrix block belonging to the $k$'the component. For the number of rooted forests, we
naturally have the product $\prod_k {\rm Det}(K_k+1)$ already because forests do not need
to be connected. With this extension of the definition for not necessarily connected graphs $G$,
the ratio $\tau(G)= {\rm Det}(K+1)/{\rm Det}(K)$ can in general be understood as the tree-forest ratio, 
whether $G$ is a connected or a disconnected graph. To make the tree-forest ratio 
functional defined on all graphs, we define $\tau(0)=1$ and $i(0)=0$ if $0$ is the empty graph. 

\paragraph{}
Since all rooted spanning trees are also rooted spanning forests, 
we have $\tau(G) \geq 1$ and so $i(G)=\log(\tau(G))/|G| \geq 0$. It is easy to check 
that $\tau(G)=1$ happens if and only if $G$ has no edges. The reason is 
that already the presence of one single edge produces more forests than trees.
For the complete graph $K_2$ for example, we have
$\tau(K_2) = 3/2$ because there are $3$ rooted forests and $2$ rooted trees 
in $K_2$. 

\paragraph{}
When taking graph limits with larger and larger trees, 
the tree forest ratio diverges in general exponentially fast with 
the diameter. For the cycle graph $C_n$ for example, there are $n^2$ rooted spanning trees
and $n$ spanning trees. In the same circular graph $C_n$,
the number of spanning forests is the alternate {\bf Lucas number} $L(n)$ 
given by the recursion $L(n+1) = 3 L(n)-L(n-1)+2, L(0)=0,L(1)=1$. 
The number of forests grows exponentially, while the number of trees
grows polynomially. We have 
$$   \tau(C_n)=n^2/L(n) \;  $$
and 
$$  \log(\tau(C_n))/n \to 2 \log(\phi) = {\rm arccosh}(3/2) \; , $$
where $\phi$ is the {\bf golden ratio}.

\paragraph{}
For any one-dimensional graph $G$ with $|G|=|E|$ edges, the 
{\bf Barycentral limit measure} $d\mu$ 
exists explicitly. The limiting density of states measure is the arcsin distribution.
Potential theoretically, the fact that that the number of trees grows polynomially while
the number of forests grows exponentially is reflect in 
$\int_0^1 \log(4 \sin^2(\pi x)) \; dx = 0$ which is not totally obvious. 
In terms of the density of states $dk(x) = \frac{1}{\pi \sqrt{x(4-x)}}$ we have
$$ U(0) = \int_0^4 \frac{\log(x)}{\pi \sqrt{x(4-x)}} \; dx = 0 \;  $$ 
and 
$$   U(-1) = \int_0^4 \frac{\log(1+x)}{\pi \sqrt{x(4-x)}} \; dx 
= \log(\phi^2) = {\rm arccosh}(3/2) =0.962424 \; . $$

% Table[RootedForest[CycleGraph[n]],{n,4,12}]
% L[0]:=0; L[1]:=1; L[n_]:=3 L[n-1]-L[n-2]+2; Table[L[n],{n,4,12}]

\paragraph{}
Here is an other class, of graphs where we have a chance to compute the limit.
One can look at {\bf circulant graphs} $C_{n,A}$, 
the Cayley graph of a generating set $A \subset V(C_n)$. If the diameter
of the graph is larger than 2, then the tree-forest ratio is going
to explode for such graphs The radius of a circulant graph is $2$ if the set $A+A$ 
in $\mathbb{Z}_p$ covers the entire set $\mathbb{Z}_p$. An extreme
case is if $A=V(C_n)$, in which case we have the complete graph. An other 
example is the graph complement $C_n'$ of $C_n$ in which case the diameter
is always $2$. 
If the set $A+A$ for the generating set of the Cayley graph does not cover the graph,
meaning that there are relations in the finitely generated group with words $3$
or longer, then the tree-forest ratio of $G_n$ diverges exponentially
as $n \to \infty$. In some cases, we can compute the limit. 
For the graph complement $\overline{C}_n$ of cyclic graphs $C_n$, which are graphs
of diameter $2$, one has:

\begin{propo}
$\tau(\overline{C}_n) \to e$.
\end{propo}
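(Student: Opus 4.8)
The plan is to exploit the fact that $\overline{C}_n$ is a circulant graph, so its Kirchhoff Laplacian is a circulant matrix and is diagonalized exactly by the Fourier characters of $\mathbb{Z}_n$. The cleanest route to the spectrum is the standard complement identity $K_{C_n} + K_{\overline{C}_n} = n I - J$, where $J$ is the all-ones matrix. On the all-ones vector $\mathbf{1}$ both sides vanish, and on the orthogonal complement $\mathbf{1}^\perp$ (spanned by the nonconstant Fourier modes) we have $J=0$, so $K_{\overline{C}_n} = nI - K_{C_n}$ there. Since the Kirchhoff eigenvalues of $C_n$ are $\lambda_k = 4\sin^2(\pi k/n)$ for $k=0,\dots,n-1$, the nonzero eigenvalues of $K_{\overline{C}_n}$ are
$$ \nu_k = n - 4\sin^2(\pi k/n), \qquad k=1,\dots,n-1 . $$
First I would record that every such $\nu_k$ lies in the narrow band $[n-4,n)$.

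Next I would write the tree-forest ratio directly as a product over the nonzero spectrum,
$$ \tau(\overline{C}_n) = \prod_{k=1}^{n-1}\left(1 + \frac{1}{\nu_k}\right), $$
and pass to logarithms, $\log\tau(\overline{C}_n) = \sum_{k=1}^{n-1}\log(1+a_k)$ with $a_k=1/\nu_k$. Because $\nu_k \geq n-4$, each $a_k = O(1/n)$, and the elementary estimate $|\log(1+a_k)-a_k| \leq a_k^2$ gives
$$ \Bigl| \log\tau(\overline{C}_n) - \sum_{k=1}^{n-1}\frac{1}{\nu_k} \Bigr| \leq \sum_{k=1}^{n-1}\frac{1}{\nu_k^2} \leq \frac{n-1}{(n-4)^2}, $$
which tends to $0$. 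The sum $\sum_k 1/\nu_k$ is exactly the normalized zeta value $\zeta(1)$ from the nutshell; I would note that the crude squeeze $\log(1+\zeta(1)) \leq \log\tau \leq \zeta(1)$ only localizes the limit to $[\log 2, 1]$, so the quadratic control above is what is actually needed to pin down the constant.

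Finally I would evaluate $\lim_n \sum_{k=1}^{n-1} 1/\nu_k$. Expanding $1/\nu_k = \tfrac{1}{n}\bigl(1-4\sin^2(\pi k/n)/n\bigr)^{-1} = \tfrac{1}{n} + O(1/n^2)$ uniformly in $k$ and summing the $n-1$ terms yields $\tfrac{n-1}{n} + O(1/n) \to 1$, where the correction is bounded using $\sum_{k=1}^{n-1}\sin^2(\pi k/n) = n/2$. Hence $\log\tau(\overline{C}_n)\to 1$ and $\tau(\overline{C}_n)\to e$. The only real subtlety, and thus the step I would treat most carefully, is confirming that the $\sim n$ factors of size $\approx 1/n$ accumulate to exactly the constant $1$ in the exponent: this rests entirely on the eigenvalues being pinned into an $O(1)$-wide window $[n-4,n)$ around $n$, so that the first-order sum converges to $1$ and the quadratic remainder is negligible. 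Everything else is the exact spectral computation, which the circulant and complement structure hands us for free.
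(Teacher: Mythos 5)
Your proof is correct, and it is more complete than the paper's own argument, which is in fact only a fragment: the paper states the eigenvalues of $\overline{C}_n$ as the circulant Fourier sum $\lambda_{k,n}=\sum_{m=2}^{n-2} 2\sin^2(\pi m k/n)$ and then breaks off mid-computation (the displayed Dirichlet-kernel identity is garbled and no limit is ever taken). You reach the same spectrum by a cleaner route, the complement identity $K_{C_n}+K_{\overline{C}_n}=nI-J$ restricted to $\mathbf{1}^\perp$, which hands you the closed form $\nu_k=n-4\sin^2(\pi k/n)$ immediately, whereas the paper would still have had to collapse its trigonometric sum to this expression. More importantly, you supply the analytic step the paper omits entirely: pinning all nonzero eigenvalues into the window $[n-4,n)$, controlling the second-order error by $\sum_k \nu_k^{-2}\leq (n-1)/(n-4)^2\to 0$, and showing $\sum_k \nu_k^{-1}\to 1$, so that $\log\tau(\overline{C}_n)\to 1$. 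Your observation that the paper's squeeze $\log(1+\zeta(1))\leq\log\tau\leq\zeta(1)$ cannot by itself identify the constant $e$ is also apt; the quadratic remainder bound is genuinely needed. The one cosmetic caveat is that for small $n$ (e.g.\ $n=4$) the complement is disconnected, but your bound $\nu_k\geq n-4>0$ shows connectivity for $n\geq 5$, which is all the asymptotic statement requires.
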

\begin{proof}
The eigenvalues of $\overline{C}_n$ are explicitly known as
$$  \lambda_{k,n} = \sum_{m=2}^{n-2} 1-\cos(2\pi m \frac{k}{n}) 
                  = \sum_{m=2}^{n-2} 2\sin^2(\pi m \frac{k}{n}) \; . $$
We have $2 \lambda = 5-2+2\cos(2\pi k/n) - \frac{\sin(k (2m-3) \pi/n }{\sin(k \pi/n)}$.
% f=Simplify[Sum[2*Sin[Pi*k*m/M]^2,{m,2,M-2}]]
% Product[f/2,{k,M}]
\end{proof}

\begin{propo}
$\tau(K_n) \to e$. 
\end{propo}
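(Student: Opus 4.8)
The plan is to exploit the fact that the Kirchhoff spectrum of $K_n$ is completely explicit, so that $\tau(K_n)$ reduces to a closed form whose limit is a standard calculus fact. First I would write the Kirchhoff matrix of the complete graph as $K = nI - J$, where $J$ is the all-ones matrix. Since $J$ carries the eigenvalue $n$ on the constant vector and the eigenvalue $0$ on its orthogonal complement, the eigenvalues of $K$ are $\lambda_0 = 0$ with multiplicity one and $\lambda_1 = \cdots = \lambda_{n-1} = n$ with multiplicity $n-1$.

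Next I would assemble the two determinants directly from this spectrum, keeping careful track of the distinction between the pseudo determinant ${\rm Det}$ and the Fredholm determinant ${\rm det}$. The pseudo determinant discards the kernel, so ${\rm Det}(K) = \prod_{\lambda \neq 0} \lambda = n^{n-1}$; as a consistency check this agrees with Cayley's formula, since $K_n$ has $n^{n-2}$ spanning trees and hence $n \cdot n^{n-2} = n^{n-1}$ rooted ones. The Fredholm determinant ${\rm det}(1+K)$ is the ordinary determinant of $I+K$, whose eigenvalues are $1+\lambda_j$ over all $j$, including the zero mode that contributes the harmless factor $1+0 = 1$. Hence ${\rm det}(1+K) = 1 \cdot (1+n)^{n-1} = (n+1)^{n-1}$.

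Forming the ratio gives $\tau(K_n) = (n+1)^{n-1}/n^{n-1} = (1 + 1/n)^{n-1}$, and the claim follows from $(1+1/n)^{n-1} = (1+1/n)^n (1+1/n)^{-1} \to e$.

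There is no genuine obstacle here; the only point to get right is the bookkeeping between the two determinants, namely that the kernel direction is suppressed inside ${\rm Det}(K)$ but reappears as the factor $1$ inside ${\rm det}(1+K)$. This is precisely why $\tau(K_n)$ converges to the Euler constant rather than to $1$, and it runs parallel to the companion statement for $\overline{C}_n$, where the nonzero Kirchhoff eigenvalues again cluster near a single value comparable to $n = |G|$. The underlying heuristic is that whenever the nonzero spectrum concentrates at a value of order $n$, one has $\tau \approx (1+1/n)^n \to e$.
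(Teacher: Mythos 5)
Your proof is correct and follows essentially the same route as the paper: both use that the nonzero Kirchhoff eigenvalues of $K_n$ all equal $n$, giving $\tau(K_n) = (1+1/n)^{n-1} \to e$. You simply make explicit the steps the paper leaves implicit (deriving the spectrum from $K = nI - J$, and the bookkeeping that the kernel contributes the factor $1$ to ${\rm det}(1+K)$ but is discarded in ${\rm Det}(K)$), which is a sound elaboration rather than a different argument.
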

\begin{proof}
The non-zero eigenvalues are all $n$. We have
$\tau(K_n) = {\rm Det}(1+K)/{\rm Det}(K) = (1+1/n)^{n-1}$. 
\end{proof}

\section{The Kirchhoff spectral Zeta function}

\paragraph{}
The {\bf Kirchhoff spectral zeta function} of a finite simple graph $G=(V,E)$ is defined as 
$$ \zeta(s) = \sum_{\lambda \neq 0} \frac{1}{\lambda^s} \; ,$$ 
where $\lambda$ ranges over the eigenvalues of the {\bf Kirchhoff Laplacian} $K$
of $G$. We see immediately from the definitions that 
$$  \tau(G) = \prod_{\lambda \neq 0} (1+\frac{1}{\lambda}) \; . $$
When taking the Barycentral limit, we need to normalize the function, usually by scaling with 
a factor $1/n$, where $n$ are the number of vertices in the graph. 

\paragraph{}
It follows from the definition that we can estimate the {\bf spectral gap}
$\lambda_1$, which is the smallest eigenvalue of $K$ from below.

\begin{propo}
$\lambda_1 \geq 1/(\tau(G)-1)$. 
\end{propo}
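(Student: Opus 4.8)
The plan is to read the inequality directly off the product formula $\tau(G)=\prod_{\lambda\neq 0}(1+1/\lambda)$ established in the previous section, where the product ranges over the nonzero eigenvalues of the Kirchhoff Laplacian $K$. The one structural fact I would invoke is that $K$ is positive semidefinite, so every nonzero eigenvalue $\lambda$ is strictly positive and hence every factor $1+1/\lambda$ exceeds $1$. Here $\lambda_1$ denotes the spectral gap, the smallest nonzero eigenvalue of $K$.

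Given positivity of all factors, the argument is a single-step lower bound on a product of terms each at least $1$. I would isolate the factor belonging to $\lambda_1$ and bound all of the remaining factors below by $1$, giving
\[
  \tau(G)=\Bigl(1+\frac{1}{\lambda_1}\Bigr)\prod_{\lambda\neq 0,\ \lambda\neq\lambda_1}\Bigl(1+\frac{1}{\lambda}\Bigr)\ \geq\ 1+\frac{1}{\lambda_1}.
\]
Rearranging yields $\tau(G)-1\geq 1/\lambda_1$, and since $\tau(G)>1$ precisely when $G$ has at least one edge (exactly the situation in which $\lambda_1$ exists), I can divide to obtain $\lambda_1\geq 1/(\tau(G)-1)$.

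There is essentially no hard step here; the estimate is a soft monotonicity observation rather than a computation, so the ``main obstacle'' is only bookkeeping on the degenerate cases. If $G$ has no edges then $\tau(G)=1$, there are no nonzero eigenvalues, and $\lambda_1$ is undefined, so the claim is vacuous; this matches the characterization $\tau=1\iff E=\emptyset$ noted earlier. If $\lambda_1$ has multiplicity greater than one, then several factors equal $1+1/\lambda_1$, which only strengthens the bound, so nothing changes. The same reasoning applies verbatim to disconnected $G$ once $\tau$ is read through the block-product definition, since the estimate only ever uses the global smallest nonzero eigenvalue.
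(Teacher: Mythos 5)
Your proposal is correct and is essentially the paper's own argument: the paper writes $(1+1/\lambda_1)=\tau(G)/\prod_{k\geq 2}(1+1/\lambda_k)\leq \tau(G)$ and rearranges, which is the same isolation of the $\lambda_1$ factor and bounding of the remaining factors by $1$ via positivity of the nonzero Kirchhoff spectrum. Your extra bookkeeping on the edgeless and multiplicity cases is sound but goes beyond what the paper records.
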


\begin{proof}
We have             
$(1+1/\lambda_1) = \tau(G)/\prod_{k \geq 2} (1+1/\lambda_k) \leq \tau(G)$.
So $\lambda_1 \geq 1/(\tau(G)-1)$.  
\end{proof}

\paragraph{}
Since $h(G) \geq \lambda_2/2$, where $h(G)$ is the {\bf Cheeger constant}
we also have 

\begin{propo}
$h(G) \geq 1/2(\tau(G)-1$.
\end{propo}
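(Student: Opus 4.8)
The plan is to obtain this bound by simply chaining together two facts that are already on the table at this point in the exposition: the lower Cheeger inequality $h(G) \geq \lambda_2/2$, quoted in the sentence immediately preceding the statement, and the spectral-gap estimate $\lambda_1 \geq 1/(\tau(G)-1)$ established in the preceding proposition. The only genuine content is the observation that these two inequalities speak about the same eigenvalue, so that they can be composed.

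First I would reconcile the notation. The preceding proposition writes $\lambda_1$ for the smallest \emph{nonzero} eigenvalue of the Kirchhoff matrix $K$ (the ground state), whereas the Cheeger statement $h(G) \geq \lambda_2/2$ uses the convention $0 = \lambda_1 < \lambda_2 \leq \cdots$ in which $\lambda_2$ denotes the algebraic connectivity. For a connected graph these are literally the same number, so I would fix one symbol, say $\mu$ for the smallest positive eigenvalue of $K$, and record both inputs as $h(G) \geq \mu/2$ and $\mu \geq 1/(\tau(G)-1)$. The substitution is then immediate, $h(G) \geq \mu/2 \geq \frac{1}{2(\tau(G)-1)}$, which is the asserted inequality. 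Since $t \mapsto 1/(2(t-1))$ is decreasing on $(1,\infty)$, the gap estimate feeds into the Cheeger bound in the correct monotone direction, so no orientation issue arises.

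The main thing to watch is not really an obstacle but a hypothesis: the chain is only meaningful when $\tau(G) > 1$, that is, when $G$ has at least one edge, since otherwise the right-hand side is undefined and $h(G) = 0$ holds trivially. The text has already noted that $\tau(G) = 1$ exactly for edgeless graphs, so for any connected graph carrying an edge the denominator $\tau(G) - 1$ is strictly positive and the argument goes through verbatim. The substantive work, namely the proof of the Cheeger lower bound and of the ground-state estimate, is cited or carried out earlier, so nothing beyond this two-step composition is needed.
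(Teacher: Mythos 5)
Your proof is correct and is exactly the paper's argument: the paper derives this proposition by composing the quoted Cheeger bound $h(G) \geq \lambda_2/2$ with the preceding proposition $\lambda_1 \geq 1/(\tau(G)-1)$, these being the same eigenvalue (the smallest positive eigenvalue of $K$) under the two indexing conventions. Your added remarks on notation reconciliation and the edgeless case $\tau(G)=1$ are sound housekeeping that the paper leaves implicit.
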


\paragraph{}
Also in reverse, a very small spectral gap $\lambda_1$ produces a large $\tau$ 
because $\tau(G) \geq 1+1/\lambda_1$. 

\paragraph{}
In gteneral, we have for a finite simple graph with (not yet normalized) zeta
function: 

\begin{propo}
$\log(\tau(G)) = \sum_{k=1}^{\infty} (-1)^{k+1} \frac{\zeta(k)}{k}$. 
\end{propo}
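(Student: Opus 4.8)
The plan is to reduce the claimed series identity to the Mercator (Taylor) expansion of the logarithm applied termwise to the product formula for $\tau(G)$, followed by an interchange of the two summations. Since $G$ is finite, $K$ has only finitely many eigenvalues, so the whole argument amounts to expanding finitely many logarithms into power series and re-collecting the coefficient of each power $\lambda^{-k}$.

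First I would start from the product representation $\tau(G)=\prod_{\lambda\neq 0}(1+1/\lambda)$, established earlier, and take logarithms to obtain $\log\tau(G)=\sum_{\lambda\neq 0}\log(1+1/\lambda)$, a finite sum over the nonzero eigenvalues of $K$. Next, to each summand I would apply the expansion $\log(1+u)=\sum_{k\ge 1}(-1)^{k+1}u^k/k$ with $u=1/\lambda$, giving $\log(1+1/\lambda)=\sum_{k\ge1}(-1)^{k+1}\lambda^{-k}/k$. Finally I would interchange the finite sum over $\lambda$ with the infinite sum over $k$; since $\sum_{\lambda\neq0}\lambda^{-k}=\zeta(k)$ by the definition of the Kirchhoff spectral zeta function, collecting the coefficient of $(-1)^{k+1}/k$ yields exactly $\sum_{k\ge1}(-1)^{k+1}\zeta(k)/k$.

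The only real content, and the step I expect to be the main obstacle, is justifying the termwise expansion together with the interchange of sums. The Mercator series for $\log(1+1/\lambda)$ converges only when $|1/\lambda|\le 1$, i.e. when every nonzero eigenvalue satisfies $\lambda\ge 1$; in that case each of the finitely many series converges and, the outer sum being finite, the interchange is automatic. If the spectral gap $\lambda_1$ is smaller than $1$, then $\zeta(k)\ge \lambda_1^{-k}$ grows geometrically, the summands $\zeta(k)/k$ fail to tend to $0$, and the right-hand side diverges as a literal series. I would therefore either state the identity under the hypothesis $\lambda_1\ge 1$, or, to obtain it unconditionally, read the right-hand side as the value at $t=1$ of the analytic continuation of $f(t)=\sum_{\lambda\neq0}\log(1+t/\lambda)=\sum_{k\ge1}(-1)^{k+1}\zeta(k)\,t^k/k$. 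This $f$ is holomorphic for $|t|<\lambda_1$, and its continuation agrees there with the finite product $\log\prod_{\lambda\neq0}(1+t/\lambda)$, whose value at $t=1$ is $\log\tau(G)$; in that reading the identity holds for every finite graph.
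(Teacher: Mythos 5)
Your argument is the paper's argument: the paper's proof is exactly ``take logs of $\tau(G)=\prod_{\lambda\neq 0}(1+1/\lambda)$, expand $\log(1+x)=x-x^2/2+\cdots$, and use Fubini.'' Where you differ is that you make explicit the convergence hypothesis that the paper's one-line invocation of Fubini silently assumes, and this is a real point, not pedantry: the Mercator expansion of $\log(1+1/\lambda)$ is valid only for $\lambda\geq 1$, and for a graph with spectral gap $\lambda_1<1$ the terms $\zeta(k)/k\geq \lambda_1^{-k}/k$ do not tend to zero, so the right-hand side diverges as a literal series. Concrete instances are easy: for the cycle $C_n$ one has $\lambda_1=4\sin^2(\pi/n)<1$ once $n\geq 7$, so the identity as stated fails there in the naive reading, even though the proposition is asserted for a general finite simple graph. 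Your two repairs are both sound: under the hypothesis $\lambda_1\geq 1$ each of the finitely many series converges (absolutely for $\lambda_1>1$, conditionally by the alternating series test when some $\lambda=1$), and interchanging an infinite sum with a \emph{finite} sum over eigenvalues needs no Fubini at all, just termwise addition of convergent series; alternatively, the Abel/analytic-continuation reading via $f(t)=\sum_{\lambda\neq 0}\log(1+t/\lambda)$, holomorphic for $|t|<\lambda_1$ and continuing to $t=1$ through the finite product, makes the identity unconditional. So your proposal is correct, follows the paper's route, and in fact closes a gap the paper leaves open; the only quibble is that the fix should be recorded as a hypothesis or a summability convention in the statement itself, since as a plain numerical series the formula is false for graphs with small spectral gap.
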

\begin{proof}
Taking logs gives 
$$ \log(\tau(G)) = \sum_{\lambda \neq 0} \log(1+ \frac{1}{\lambda}) \; . $$
Using the expansion $\log(1+x) = x-x^2/2+x^3/3+ \dots$ and Fubini, we see
$$ \log(\tau(G)) = \zeta(1)-\zeta(2)/2-\zeta(3)/3 + \dots \; . $$
\end{proof}

\paragraph{}
The Kirchhoff Laplacian $K$ is one possible choice which comes natural when looking at 
trees or forests. We could also look at the {\bf Hodge Laplacian} $L=D^2$ with 
{\bf Dirac operator} $D=d+d^*$ of a graph $G$. The later has 
symmetric spectrum $-\lambda_n,  \dots, -\lambda_1, 0, \lambda_1 , \lambda_n$. 
This gives a {\bf Hodge zeta function} $\zeta_L(s)$. Define the {\bf Dirac spectral function} as 
$$ \zeta_D(s) = \sum_{\lambda >0} \frac{1}{\lambda^{s}}  $$
A {\bf Dirac tree forest ratio} of a graph could now be defined as
$$  \tau_D(G)  = \frac{{\rm Det}(1+D)}{{\rm Det}(D)}  $$
Because non-zero eigenvalues come in pairs $\lambda,-\lambda$, we have
$$ \tau_D(G) = \prod_{\lambda > 0} (1 + \frac{1}{\lambda})(1-\frac{1}{\lambda}) 
             = \prod_{\lambda > 0} (1 - \frac{1}{\lambda^2}) \; . $$
If $1$ is an eigenvalue of $D$ we would exclude that factor. One could also look at the 
connection Laplacian zeta function. 

\paragraph{}
Looking at the zeta function rather than the eigenvalues adds an analytic angle to 
spectral theory. One can look at a {\bf analytic properties} of the function  $\zeta(s)$ 
and especially at the places, where $\zeta(s)$ is singular or places where $\zeta(s)$ 
is zero. The above formula shows that the tree-forest ratio measures a sort of 
growth rate of $\zeta(s)$ along the real axes.

\paragraph{}
Unlike trees or forests, 
the {\bf spectral zeta function} can be considered for manifolds $M$, even so
the interpretation of trees and forests is no more adequate in the continuum. 
On compact Riemannian manifolds $M$ we have an exterior derivative $d$ 
and so a Hodge Laplacian $L=D^2 = (d+d^*)^2$.  Define 
$$ \tau(M) = {\rm Det}(1+L)/{\rm Det}(L) \; , $$
where both ${\rm Det}$ is the Ray-Singer determinant defined by a zeta regularization of 
$L=(d+d^*)^2$. One could also look just at the zeta function defined by the 
scalar Laplacian. We do not see any use yet for the quantity $\tau(M)$, the point
is just that it can be considered. 

\paragraph{}
Let us look for example at the case $M=\mathbb{T}$, where $D=i \frac{d}{dx}$ which 
has the eigenfunctions $e^{i n x}$ with eigenvalues $-n$. The Dirac zeta function
$$ \sum_{n>0} \frac{1}{n^s} \; . $$ 
The Laplacian zeta function is $\sum_{n>0} \frac{1}{n^{2s}}$
as the eigenvalues of $L$ are $1/n^2$. 

% N[Sum[Log[1+1/n^2],{n,1,10000}]]
% Sum[Log[1+1/n^2],{n,1,Infinity}]

\paragraph{}
\begin{propo}
$\tau(\mathbb{T}) = \sinh(\pi)/\pi$.
\end{propo}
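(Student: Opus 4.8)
The plan is to reduce the whole ratio to a single convergent infinite product, sidestepping the separate computation of either Ray--Singer determinant. First I would fix the spectrum: on $\mathbb{T}$ the Dirac operator $D=i\,d/dx$ has eigenfunctions $e^{inx}$, so $L=D^2$ has eigenvalues $n^2$, and following the normalization used above (summation over $n>0$, i.e.\ multiplicity one) the associated spectral zeta functions are $\zeta_L(s)=\sum_{n\geq 1} n^{-2s}=\zeta_R(2s)$ and $\zeta_{1+L}(s)=\sum_{n\geq 1}(1+n^2)^{-s}$, where $\zeta_R$ is the Riemann zeta function. By the definition of the Ray--Singer determinant, $\log\,{\rm Det}(A)=-\zeta_A'(0)$, so $\log\tau(\mathbb{T})=\zeta_L'(0)-\zeta_{1+L}'(0)$.

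The key step is to treat the ratio directly rather than the two determinants separately. I would set $h(s)=\zeta_{1+L}(s)-\zeta_L(s)=\sum_{n\geq 1}\bigl[(1+n^2)^{-s}-n^{-2s}\bigr]$ and observe that, since $(1+n^2)^{-s}-n^{-2s}=O(|s|\,n^{-2s-2})$ as $n\to\infty$, the series converges locally uniformly on $\{\mathrm{Re}(s)>-1/2\}$. Hence it is the analytic continuation of $h$ past $s=0$ and may be differentiated term by term, giving $h'(0)=-\sum_{n\geq 1}\log(1+1/n^2)$, a convergent sum. Consequently $\log\tau(\mathbb{T})=-h'(0)=\sum_{n\geq 1}\log(1+1/n^2)=\log\prod_{n\geq 1}(1+1/n^2)$.

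It then remains only to evaluate the product. I would invoke the Weierstrass factorization $\sinh(\pi z)/(\pi z)=\prod_{n\geq 1}(1+z^2/n^2)$ (equivalently the product for $\sin$ with $z\mapsto iz$) and specialize to $z=1$, which yields $\prod_{n\geq 1}(1+1/n^2)=\sinh(\pi)/\pi$ and hence $\tau(\mathbb{T})=\sinh(\pi)/\pi$. As a consistency check one can recover the determinants individually: $\zeta_L'(0)=2\zeta_R'(0)=-\log(2\pi)$ gives ${\rm Det}(L)=2\pi$, so that ${\rm Det}(1+L)=\tau(\mathbb{T})\,{\rm Det}(L)=2\sinh(\pi)$.

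The main obstacle is conceptual rather than computational: a priori the ratio of two zeta-regularized determinants need not equal the naive product $\prod_{n\geq 1}(1+1/n^2)$, because of the multiplicative anomaly. What rescues the computation is precisely that the difference $h(s)$ of the two zeta functions is represented by a series that already converges in a neighborhood of $s=0$; this is what legitimizes term-by-term differentiation and collapses the regularized ratio onto the ordinary convergent product. I would therefore take care to justify the locally uniform convergence of $h(s)$ and of its termwise derivative on $\{\mathrm{Re}(s)>-1/2\}$, since this is the one place where the argument could fail.
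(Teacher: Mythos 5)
Your proposal is correct, and it reaches the result by the same final computation as the paper --- the Weierstrass product $\sinh(\pi z)/(\pi z)=\prod_{n\ge 1}(1+z^2/n^2)$ at $z=1$, equivalently the paper's $\sin(\pi x)=\pi x\prod_{n\ge 1}(1-x^2/n^2)$ at $x=i$ --- but it supplies a justification step the paper skips entirely. The paper's proof is one line: it simply writes $\tau(\mathbb{T})=\prod_{n>0}(1+1/n^2)$ as if the ratio of the two zeta-regularized determinants were automatically the naive product over eigenvalue factors, and then evaluates the product. You correctly identify that this identification is exactly where a rigorous argument is needed (the multiplicative anomaly issue), and your device --- writing $h(s)=\zeta_{1+L}(s)-\zeta_L(s)=\sum_{n\ge 1}\bigl[(1+n^2)^{-s}-n^{-2s}\bigr]$, checking the terms are $O(|s|\,n^{-2\mathrm{Re}(s)-2})$ so the series converges locally uniformly on $\{\mathrm{Re}(s)>-1/2\}$, and differentiating termwise at $s=0$ to get $\log\tau(\mathbb{T})=-h'(0)=\sum_{n\ge 1}\log(1+1/n^2)$ --- is precisely the standard way to collapse the regularized ratio onto the ordinary convergent product. (One small simplification: once locally uniform convergence of $h$ is established, termwise differentiability comes free from the Weierstrass theorem on series of analytic functions, so you need not separately verify convergence of the differentiated series.) Your consistency check ${\rm Det}(L)=e^{-\zeta_L'(0)}=2\pi$ via $\zeta_L'(0)=2\zeta_R'(0)=-\log(2\pi)$, hence ${\rm Det}(1+L)=2\sinh(\pi)$, is also correct under the paper's multiplicity-one convention $\zeta_L(s)=\zeta_R(2s)$, which you rightly flagged and adopted. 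In short: what the paper buys with its brevity is lost rigor at the regularization step; what your route buys is a complete argument at the cost of one convergence lemma.
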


\begin{proof}
We have 
$$ \tau(M)  = \prod_{n>0} (1+\frac{1}{n^2}) = \sinh(\pi)/\pi $$
using $\sin(\pi x) = \pi x \prod_{n=1}^{\infty} (1-\frac{x^2}{n^2})$ at $x=i$.
\end{proof} 

\section{The Barycentral limit}

\paragraph{}
The quantity $T(G)=\log(\tau(G))$ is additive with respect to the
disjoint union operation $T(G+H) = T(G)+T(H)$.  Let $|G|$ denote the number
of vertices of $G$. Define the {\bf tree-forest index} 
$$   i(G) = \log(\tau(G))/|G| \;  $$
where $|G|$ is the number of vertices. 
It has a chance to remain finite when taking graph limits. 
Let $G_n$ denote the $n$'th Barycentric refinement of a graph $G=G_0$. 
Now, the growth of the tree forest ratio is dominated by the growth
of the forests. The index $i(G_n)$ is the difference of the 
Forest index $\log(F(G_n))/|G_n|$ and the tree index. 

\begin{thm}[Tree Forest Universality]
$\lim_{n \to \infty} i(G_n)$ exists for every graph $G$ and only depends on
the maximal dimension $d$ of $G$. For $d=1$, the limit is $2\log(\phi)$,
where $\phi$ is the golden ratio. 
\end{thm}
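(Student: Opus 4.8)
The plan is to write the index as a single spectral integral against the normalized Kirchhoff density of states and then pass to the Barycentric limit. First I would record, from $\tau(G_n)=\prod_{\lambda\neq 0}(1+1/\lambda)$ and taking logarithms, that
$$ i(G_n)=\frac{1}{|G_n|}\sum_{\lambda\neq 0}\log\Big(1+\frac{1}{\lambda}\Big)=\int_{(0,\infty)}f(x)\,d\mu_n(x), \qquad f(x)=\log\Big(1+\frac1x\Big), $$
where $\mu_n=\frac{1}{|G_n|}\sum_\lambda\delta_\lambda$ is the normalized spectral measure of the Kirchhoff Laplacian $K$ of $G_n$; for connected $G$ the single zero eigenvalue carries mass $1/|G_n|\to 0$ and never enters the sum. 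By the spectral Barycentric limit theorem \cite{KnillBarycentric,KnillBarycentric2}, $\mu_n\rightharpoonup\mu$ weakly, with $\mu$ a probability measure on $[0,\infty)$ depending only on the maximal dimension $d$. The whole problem thus reduces to proving $\int f\,d\mu_n\to\int f\,d\mu$ and reading off that the limit depends only on $\mu$, hence only on $d$.

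The function $f$ is continuous, positive and strictly decreasing on $(0,\infty)$, but unbounded at both ends, so weak convergence alone is insufficient and I would establish uniform integrability of $f$ against $\{\mu_n\}$ separately near $\infty$ and near $0$. For the tail, monotonicity gives $f(x)\le\log(1+1/M)$ for $x\ge M$, and with the tail estimate $\mu_n[M,\infty)\le C e^{-a_d M}$ — itself a consequence of $\lambda_k\le 2d_k$ and the exponentially thin degree tails of Barycentric refinements, uniformly in $n$ — one gets $\int_M^\infty f\,d\mu_n\le\log(1+1/M)\,Ce^{-a_dM}$, tending to $0$ uniformly in $n$. On any middle block $[\epsilon,M]$ the function $f$ is bounded and continuous, so weak convergence yields $\int_{[\epsilon,M]}f\,d\mu_n\to\int_{[\epsilon,M]}f\,d\mu$ directly. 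This isolates the neighbourhood of the bottom of the spectrum.

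The main obstacle is the logarithmic singularity of $f$ at $x=0$: writing $f(x)=\log(1+x)-\log x$, the bounded part contributes at most $\log(1+\epsilon)\,\mu_n(0,\epsilon)$, so everything comes down to controlling $\int_{(0,\epsilon)}\log(1/x)\,d\mu_n$ \emph{uniformly in $n$}, i.e. to ruling out that mass of $\mu_n$ collapses onto arbitrarily small eigenvalues. A first, purely combinatorial input is the sandwich $0\le U_n(0)\le U_n(-1)$, where $U_n(0)=\frac{1}{|G_n|}\log{\rm Det}(K)$ is the tree index and $U_n(-1)$ the forest index; since ${\rm Det}(K)\ge 1$ counts rooted spanning trees, and rooted spanning trees are in particular rooted spanning forests, $U_n(0)$ stays in $[0,C]$. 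Splitting $U_n(0)=\int_{(1,\infty)}\log x\,d\mu_n-\int_{(0,1)}\log(1/x)\,d\mu_n$ and bounding the first term by the exponential tail estimate then gives a uniform bound $\sup_n\int_{(0,1)}\log(1/x)\,d\mu_n<\infty$. To upgrade this to genuine uniform integrability I would add a near-zero mass estimate $\mu_n(0,\delta)\le C\delta^{1/2}$ — matching the $x^{-1/2}$ blow-up of $\mu$ at the origin and obtainable from a Weyl-type count of small Kirchhoff eigenvalues — so that the dyadic sum $\sum_k(k+1)\log2\cdot\mu_n[2^{-k-1},2^{-k}]$ is summable with tail vanishing as $\epsilon\to0$ uniformly in $n$. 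Combining the three regions gives $i(G_n)\to\int_0^\infty\log(1+1/x)\,d\mu(x)=:i_d$, a number depending only on $d$ because $\mu$ does; this proves existence and universality.

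Finally, for $d=1$ the limiting measure is explicit — the arcsine equilibrium distribution $d\mu(x)=\frac{1}{\pi\sqrt{x(4-x)}}\,dx$ on $[0,4]$ — and the constant splits as $i_1=U(-1)-U(0)$. The two evaluations already recorded in the text, $U(0)=\int_0^4\frac{\log x}{\pi\sqrt{x(4-x)}}\,dx=0$ and $U(-1)=\int_0^4\frac{\log(1+x)}{\pi\sqrt{x(4-x)}}\,dx={\rm arccosh}(3/2)=\log(\phi^2)$, then give $i_1=\log(\phi^2)=2\log\phi$, as claimed. The vanishing $U(0)=0$, reflecting that in one dimension the number of rooted spanning trees grows only polynomially, is the arithmetic heart of this last step.
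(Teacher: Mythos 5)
Your proposal follows essentially the same route as the paper: rewrite $i(G_n)=\int\log(1+1/x)\,d\mu_n$, invoke the Barycentric spectral limit theorem for weak convergence $\mu_n\rightharpoonup\mu$ to a dimension-dependent measure, control the tail at infinity by the exponential estimate coming from $\lambda_k\le 2d_k$ together with the generation/degree counting, dispose of the bottom of the spectrum via the combinatorial sandwich $0\le U_n(0)\le U_n(-1)$, and finish the case $d=1$ with the arcsine equilibrium measure and the evaluations $U(0)=0$, $U(-1)=\mathrm{arccosh}(3/2)=2\log\phi$. All of these ingredients are the paper's ingredients, and your three-region decomposition is a cleaner organization of the same argument. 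You are in fact \emph{more} careful than the paper at one point: you correctly observe that the sandwich only yields $\sup_n\int_{(0,1)}\log(1/x)\,d\mu_n<\infty$, which is a uniform bound and not uniform integrability, so it does not by itself rule out a fixed amount of log-mass escaping to $0$ (mass $\sim 1/\log(1/\epsilon_n)$ at scale $\epsilon_n\to 0$ is invisible to the weak limit but contributes order one to every $i(G_n)$). The paper glosses over exactly this with the remark that ``the convergence at the bottom of the spectrum $z=0$ is not a problem'' and reduces everything to finiteness of $\int\log(1+x)\,d\mu$; finiteness of the limiting integral is not the same as convergence of $U_n(0)$ to $U(0)$, and for $d\ge 2$ (where $U(0)$ is presumably strictly positive) the crude two-sided bound $\int f\,d\mu\le\liminf i(G_n)\le\limsup i(G_n)\le\int\log(1+x)\,d\mu$ does not close.

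The genuine soft spot in your writeup is the patch you insert at precisely that point: the near-zero mass estimate $\mu_n(0,\delta)\le C\delta^{1/2}$, uniform in $n$, which you assert is ``obtainable from a Weyl-type count of small Kirchhoff eigenvalues.'' This is not proved in your proposal, is not in the paper, and is not an off-the-shelf fact. Schur majorization runs in the wrong direction here: it bounds sums of the $m$ \emph{smallest} eigenvalues from above by sums of the $m$ smallest degrees, which is consistent with many tiny eigenvalues and cannot bound $\mu_n(0,\delta)$ from above; what one needs is a lower bound on $\lambda_k$ of Weyl type $\lambda_k\gtrsim (k/|G_n|)^{2/d}$ for Barycentric refinements, i.e.\ higher-order Cheeger-type control, and the paper's own Questions section concedes that the behavior of $dk$ near $0$ (even the existence of spectral gaps) is not understood for $d\ge 2$. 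So your proof is the paper's proof with the hidden hypothesis at $z=0$ made explicit and then discharged by an unproven lemma: a real improvement in diagnosis, but the bottom-of-spectrum step remains the unclosed gap in both arguments.
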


\begin{proof}
We prove this from the Barycental central limit theorem which asserts that there
is a (only dimension-dependent measure) $\mu$ on $[0,\infty)$ such that the 
spectral measures of $L(G_n)$ (which are point measures for all $n$) converge 
weakly to $\mu$. Now, we have $i(G_n) = \int_0^{\infty} \log(1+1/x) d\mu(x)$
which is a difference of two {\bf potential values} 
$$ i(G_n) = \int_0^{\infty} \log(1+x) d\mu(x) - \int_0^{\infty} \log(x) \; d\mu(x) \; .  $$
The $1$-dimensional case is special in that $d\mu$ has compact support on $[0,4]$ and
is the equilibrium measure on that interval. The potential $f(z) = \int \log|a-z| \; d\mu(z)$
exists for every $a \in \mathbb{Z}$ and is subharmonic and zero exactly on the 
interval $[0,\pi]$. The limiting index $i(G)$ in $1$-dimension is therefore the potential 
value at $a=-1$.  \\
In the $d=2$ or higher dimensional case, the measure $\mu$ does no more have compact support. 
This is related to the fact that the maximal vertex degree becomes unbounded under Barycentric 
refinements. While in one dimension, the measure $\mu$ was absolutely continuous with a density
concentrated mostly on the boundary $0,4$ of the support, in higher dimensions, the measure 
has a tail continuity both at the lower bound $0$ as well as at infinity. At zero we need
a mild decay to make sure that $\int_0^{\infty} \log(x) \; d\mu(x)$ is finite. 
At infinity, we need a stronger decay to make sure that $\int_0^{\infty} \log(1+x) d\mu(x)$ exists. 
But we know also from potential theory of subharmonic functions and in our case from the 
combinatorial context that $0 \leq \int_0^{\infty} \log(x) \; d\mu(x) < \int_0^{\infty} \log(1+x) d\mu(x)$. 
It suffices therefore to show that $\int_0^{\infty} \log(1+x) d\mu(x)$ exists. 
This amounts to estimate the decay rate of $\mu$ at infinity.
The fact that there is an exponential decay in dimension $2$ or higher in the next section.
We are using an estimate $\lambda_k \leq 2 d_k$,
a Perron-Frobenius result on the limiting dimension distribution in Barycentric limits
as well as an estimation of the vertex cardinality points of $G_n$ which depends on the
{\bf generation} of a point. A vertex in $G_n$ has generation $k$ if it has become a vertex
in $G_{n-k}$ and not before. A vertex in $G_n$ of {\bf generation} zero is a vertex which has
in the previous level $G_{n-1}$ not been a vertex. Note that if a simplex is a vertex, it is
especially again a vertex in the next generation. What happens is that we have an 
exponential decay of vertex degrees depending on generation. 
\end{proof}

\paragraph{}
A different proof could be done along the lines of the Barycentral spectral limit
theorem without using the spectral measure $d\mu$:
if $G$ has maximal dimension $d$ then the Barycentric refinement
$G_1$ of a $d$-simplex in $G$ consists of $(d+1)!$ simplices glued.
There is a stratification of growth rates of the different simplices
depending on dimension. The number of maximal simplices grows exponentially
faster than the number of co-dimension-one simplices, we can asymptotically treat every 
refined $n$-simplex as a copy of disjoint $(n+1)!$ simplices plus a gluing correction
which as it is lower dimensional will be a definite factor smaller. The limiting tree forest
ratio is again Cauchy sequence as we add up a sequence of errors which geometrically decay. 

\section{Tail decay}

\paragraph{}
In this section we prove a proposition on the 
limiting density of states $dk$ in dimension $d \geq 2$. 

\begin{propo}
There exists a positive constant $c$ such that 
$\mu([x,\infty)) \leq e^{-c x}$. 
\end{propo}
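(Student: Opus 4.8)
The plan is to bound the spectral tail by the vertex-degree tail and then control the latter combinatorially through the generation filtration. First I would use the eigenvalue estimate $\lambda_k \le 2 d_k$, with the eigenvalues $\lambda_1 \ge \lambda_2 \ge \cdots$ of $K(G_n)$ and the degrees $d_1 \ge d_2 \ge \cdots$ of $G_n$ both sorted decreasingly. Since this forces $\#\{k : \lambda_k \ge x\} \le \#\{v : d(v) \ge x/2\}$, passing to the Barycentric limit gives
$$ \mu([x,\infty)) \le \limsup_{n\to\infty} \frac{1}{|V(G_n)|} \#\{ v \in V(G_n) : d(v) \ge x/2 \} =: \nu([x/2,\infty)), $$
the tail of the limiting degree distribution $\nu$. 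It therefore suffices to prove the bound for $\nu$.

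Second, I would stratify $V(G_n)$ by generation. The vertices created at step $n-g$ number $\sum_{j\ge 1} f_j(G_{n-g-1})$, and by the Perron--Frobenius structure of the Barycentric refinement operator on the $f$-vector, whose leading eigenvalue is $(d+1)!$, the fraction of generation-$g$ vertices decays geometrically, of order $(d+1)!^{-g}$. The key is that degree is essentially a function of generation: for a persisting vertex $v$ the neighbors are exactly the cofaces of $v$, and $\mathrm{star}(v)=v*\mathrm{link}(v)$ refines together with the ambient complex, so $d_{G_n}(v)$ equals the total simplex count of the $(g-1)$-fold Barycentric refinement of the original link of $v$. For $d\ge 2$ this link is at least one-dimensional, its simplex count grows geometrically, and one obtains $d(v) \asymp \rho_d^{\,g}$ with a dimension-dependent ratio $\rho_d>1$. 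The Perron--Frobenius dimension distribution simultaneously shows that the remaining low-generation vertices have degree bounded by a constant depending only on $d$.

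Combining the two estimates is the crux, and it is where I expect the real difficulty. A degree threshold $D$ forces generation $g \gtrsim \log_{\rho_d} D$, so the generation tail contributes a mass of order $(d+1)!^{-\log_{\rho_d} D}=D^{-\log(d+1)!/\log\rho_d}$. The hard part is that this natural bookkeeping produces a \emph{power law} rather than the asserted exponential rate: degrees grow geometrically in the generation, while generations themselves only decay geometrically in number. Upgrading to $\mu([x,\infty))\le e^{-cx}$ therefore seems to need more than $\lambda_k\le 2 d_k$ --- either a sharper input showing that the rare high-degree old vertices do not actually create eigenvalues comparable to their degree, or a proof that the effective degree growth along a single generation is sub-geometric. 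I would first make the power-law bound fully rigorous, which already yields finiteness of $\int_0^\infty \log(1+x)\,d\mu(x)$ as required in Theorem 1, and then attack the exponential refinement by estimating $\#\{\lambda_k(G_n)\ge x\}$ directly through a Schur-complement or resolvent-decay argument localized at the high-generation vertices, which is the step I expect to be the genuine obstacle.
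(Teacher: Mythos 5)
Your reduction and stratification are exactly the paper's own argument: the paper uses the same estimate $\lambda_k \leq 2d_k$ (via Gershgorin/interlacing), its Lemma 1 gives the geometric decay $(d+1)!^{-k}$ of the fraction of generation-$k$ vertices, and its Lemma 2 gives degree growth $c_2^k$ with $c_2 = d!$, which is precisely your $\rho_d$ obtained from refining the $(d-1)$-dimensional link at rate $d!$. The one place you diverge is the final step, and there you are the more careful party. The paper combines $c_1 = 1/(d+1)!$ and $c_2 = d!$ and then simply asserts that ``the function itself decays exponentially''; but as you correctly diagnose, a fraction $c_1^k$ of eigenvalues above the threshold $x = 2c_2^k$ is exponential only in the generation variable $k$, and in the spectral variable it reads $\mu([x,\infty)) \lesssim (x/2)^{-\alpha}$ with $\alpha = \log((d+1)!)/\log(d!)$ --- a power law. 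So the gap you flag is genuine, but it sits in the paper's proof itself, which supplies no ingredient beyond the two lemmas you already reproduce.

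Moreover, the escape route you propose in your last step --- showing that the rare high-degree old vertices do not create eigenvalues comparable to their degree --- is closed: they do. Any two vertices that already existed as vertices of $G_{n-1}$ are non-adjacent in $G_n$ (a $0$-dimensional simplex cannot contain another one), so the old vertices form an independent set; compressing $K(G_n)$ to the span of their delta functions gives a diagonal matrix of degrees, and Cauchy interlacing yields at least $m$ eigenvalues $\geq D$ whenever $m$ old vertices have degree $\geq D$. Applying this to the $\asymp (d+1)!^{n-g}$ barycenters of $d$-simplices born $g$ generations back, whose degrees are $\asymp d!^g$, gives $\mu([c\,d!^g,\infty)) \gtrsim (d+1)!^{-g}$, i.e.\ the power-law tail is sharp and the exponential bound of the proposition appears to be too strong as stated. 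The good news, which you also noted, is that your power law (here with $\alpha > 1$, though any $\alpha > 0$ suffices) already gives $\int_0^{\infty} \log(1+x)\,d\mu(x) = \int_0^{\infty} \mu([x,\infty))/(1+x)\,dx < \infty$, which is all the convergence theorem needs. So the right course is to make your first, power-law part rigorous and weaken the proposition accordingly, not to pursue the exponential rate via Schur complements or resolvent decay.
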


\paragraph{}
In dimension $d=1$, the density of states $dk$ has compact 
support $[0,4]$ and therefore does not need an exponential 
decay estimate. The lemma of course still applies.
Let therefore $G$ be a graph of dimension $d \geq 2$
and let $G_n$ be the Barycentric refinements of $G$. The vertices 
of $G_n$ are the simplices (complete subgraphs) of  $G_{n-1}$ and two 
are connected, if one is contained in the other.  Every vertex $x$ in $G_n$
can be assigned a {\bf generation} $\gamma(x)$ defined to be the integer $k$,
if it has been a vertex in $G_{n-1}, \dots, G_{n-k+1}$.  
For example, a vertex has generation $1$ if it has been a simplex of
positive dimension in $G_{n-1}$.  A point has generation $n$ if has
been a vertex in $G$ already. Every vertex in $G_n$ has generation 
$\gamma(x) \in \{1,2,\dots, n\}$. 

\paragraph{}
The $f$-vector of $G$ is the vector $(f_0,f_1, \dots, f_d)$, where
$f_k$ are the number of $k$-dimensional simplices in $G$. 
There are ${\rm gen}_n = f_0(G_0)$ vertices of generation $n$. \\
There are ${\rm gen}_{n-1}=f_0(G_1)-{\rm gen}_n$ vertices of generation $n-1$.  \\
There are ${\rm gen}_{n-2}=f_0(G_2)-{\rm gen}_{n}-{\rm gen}_{n-1} = f_0(G_2)-f_0(G_1)$ vertices of generation $n-2$  \\
There are ${\rm gen}_1    =f_0(G_n)-{\rm gen} {n}-{\rm gen}_{n-2} ..-gen_2$ vertices of generation $1$. \\
Since $f_0(G_n)$ grows exponentially at least like $(d+1)!^n$, we also have in $G_n$
at least $C (d+1)!^{n-k}$ vertices of generaton $k$. 

The {\bf Barycentric refinement operator} $A$ is a $(d+1) \times (d+1)$ upper
triangular matrix defined by $f(G_1)=A f(G)$. 
It is given as $A_{ij} = {\rm Stirling}(i,j) i!$, involving the {\bf Stirling numbers 
of the second kind}. The {\bf Perron-Frobenius eigenvector} of $A$ is the 
probability eigenvector of $A$ belonging to the largest eigenvalue $(d+1)!$. 
For example, in the case $d=2$, when 
$A=\left[ \begin{array}{ccc} 1 & 1 & 1 \\ 0 & 2 & 6 \\ 0 & 0 & 6 \\ \end{array} \right]$,
the Perron-Frobenius eigenvector is $[1/6,1/2,1/3]$ to the eigenvalue $(d+1)!=6$. 
This means that for large $n$, the graph $G_n$ has about $1/6$th of the simplices
which are points, about a half which are edges and about $1/3$rd which are triangles. 
In the limit $d \to \infty$, the Perron Frobenius probability measures appear converge weakly 
to a point measure around $0.7215$, a fact which we could not prove yet. 

\begin{lemma}
There is a $c_1>0$ such that for all k, at least $c_1^{n-k} |G_n|$ vertices
in $G_n$ are generation $k$ vertices. 
\end{lemma}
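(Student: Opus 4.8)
The plan is to reduce the statement to the growth asymptotics of the vertex count $f_0(G_m)$ under Barycentric refinement, which is governed by the Perron–Frobenius eigenvalue $(d+1)!$ of the refinement operator $A$. First I would record that every vertex of $G_n$ persists under further refinement, so the number of vertices of a fixed birth level $b$ (those appearing as vertices for the first time in $G_b$) is the telescoping difference $f_0(G_b)-f_0(G_{b-1})$, with $f_0(G_0)$ for $b=0$. This is exactly the quantity counted by a fixed generation, so the whole lemma is a statement about these differences relative to $|G_n|=f_0(G_n)$.

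Next I would establish two-sided geometric bounds $c_-\,(d+1)!^{m}\le f_0(G_m)\le c_+\,(d+1)!^{m}$ with constants independent of $m$. Since $A$ is upper triangular with distinct diagonal entries $1!,2!,\dots,(d+1)!$, its largest eigenvalue $(d+1)!$ is simple, and the associated Perron–Frobenius eigenvector $v$ has strictly positive vertex component $v_0>0$. Writing $f(G_0)$ in the eigenbasis of $A$, the iterate $f(G_m)=A^m f(G_0)$ has leading term a positive multiple of $(d+1)!^m v$, so $f_0(G_m)=(d+1)!^m\bigl(\alpha v_0+o(1)\bigr)$ with $\alpha v_0>0$, the subdominant eigenvalues $j!$ with $j\le d$ contributing only $O((d!)^m)=o((d+1)!^m)$. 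Feeding this into the telescoping difference gives, for each birth level $b$,
\begin{equation*}
f_0(G_b)-f_0(G_{b-1})=(d+1)!^{\,b}\Bigl(\alpha v_0\bigl(1-\tfrac{1}{(d+1)!}\bigr)+o(1)\Bigr)=\Theta\bigl((d+1)!^{\,b}\bigr).
\end{equation*}
Dividing by $|G_n|=f_0(G_n)=\Theta((d+1)!^{\,n})$ and absorbing the bounded ratio of Perron constants into a single $c_1>0$ with $c_1\le 1/(d+1)!$ yields the claimed bound $c_1^{\,n-k}|G_n|$ once $b$ is matched to the generation index $k$ of the statement.

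The main obstacle I anticipate is uniformity of the error term: to turn the asymptotic $o(1)$ into an honest lower bound valid for \emph{all} $k$ (including the oldest generations, where the count is only $\Theta(1)$ and the geometric factor $c_1^{\,n-k}$ is smallest), one must control the subdominant contributions $\sum_{j\le d} c_j\,(j!)^m u_j$ uniformly, i.e. keep the Perron term strictly dominant down to the smallest birth levels. This is where positivity of $v_0$ and the strict spectral gap $(d+1)!>d!$ are essential, and it is also where the precise choice of $c_1$ (and a possible harmless multiplicative constant in front of $c_1^{\,n-k}$) must be made so the bound survives at the boundary generations $b=0$ and $b=n$. The remaining steps—the telescoping identity and the Perron–Frobenius decomposition—are routine once this uniform spectral control is in place.
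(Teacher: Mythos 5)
Your proposal is correct and takes essentially the same route as the paper, whose argument for this lemma is precisely your telescoping identity $\mathrm{gen}_b = f_0(G_b)-f_0(G_{b-1})$ combined with the exponential growth $f_0(G_n)\sim C\,(d+1)!^{\,n}$ coming from the simple Perron--Frobenius eigenvalue $(d+1)!$ of the upper-triangular refinement operator $A$. If anything, you are more careful than the paper: it asserts the spectral input without the eigenbasis decomposition and passes silently over the boundary generations (the exponent-zero case $k=n$, where the literal bound can only hold after inserting a harmless multiplicative constant or shrinking $c_1$), which you correctly identify as the point needing uniform control.
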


This means that the generation $\gamma$ as a random variable on the finite set 
of vertices of $G_n$ asymptotically has an exponential distribution. There
very few vertices of large generation and very many of small generation. 
Now we will see that on large generation vertices, there is an upper bound on 
the eigenvalues which is exponential too in the generation. 

\paragraph{}
If $S(x)$ is the {\bf unit sphere} of a vertex $x$, then
the k'th Barycentric refinement $S_k(x)$ is the unit sphere of $x$ in $G_k$.
Because $|S_k(x)|$ is the {\bf vertex degree} of $x$, the vertex degree of 
generation $k$ vertices is at least $|S_k(x)| \sim d!^k$. 

\begin{lemma}
There exists $c_2>0$ such that maximally $c_1^k |G_n|$ vertices have
degree larger than $c_2^k$.
\end{lemma}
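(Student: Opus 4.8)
The plan is to bound the number of high-degree vertices by the number of rare high-generation vertices, combining the two ingredients already assembled: that the degree of a generation-$j$ vertex is comparable to $d!^{\,j}$, and that old (high generation) vertices are exponentially scarce in $G_n$. First I would make the degree--generation correspondence quantitative and \emph{two-sided}. When a simplex $\sigma$ of $G_{n-j}$ becomes the vertex $x$ of $G_{n-j+1}$, its unit sphere at birth has size bounded by a constant $b_d$ depending only on $d$, since the local simplex structure around $\sigma$ inside a $d$-graph is uniformly bounded. Under each further refinement the unit sphere, being a $(d-1)$-dimensional complex, has its vertex count multiplied by a factor at most the top eigenvalue $d!$ of the refinement operator in dimension $d-1$ (with an $n$-independent multiplicative constant from the convergence of the normalized $f$-vector to the Perron--Frobenius eigenvector). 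Hence a vertex of generation $\gamma(x)=j$ satisfies $\deg(x)=|S_j(x)|\le C_d\, d!^{\,j}$, matching the lower bound $|S_k(x)|\sim d!^{k}$ quoted above.

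Given this, I would set $c_2 := d!$ and observe that $\deg(x) > c_2^{\,k}=d!^{\,k}$ forces $C_d\, d!^{\,\gamma(x)} \ge \deg(x) > d!^{\,k}$, so $\gamma(x) \ge k - \log_{d!} C_d =: k - c_0$ with $c_0 = c_0(d)$. Thus every vertex of degree exceeding $c_2^{\,k}$ has generation at least $k-c_0$, and the high-degree set is contained in the set of generation-$\ge(k-c_0)$ vertices. To count the latter I would use that a vertex of $G_n$ has generation $\ge m$ iff it was already a vertex of $G_{n-m+1}$, so their number is exactly $f_0(G_{n-m+1})$. Since the refinement operator $A$ has Perron--Frobenius eigenvalue $(d+1)!$, we have $f_0(G_\ell)\le C\,(d+1)!^{\,\ell}$ and $|G_n|=f_0(G_n)\ge c\,(d+1)!^{\,n}$ with $c,C$ depending only on $d$, whence
$$ \frac{\#\{\gamma \ge m\}}{|G_n|} = \frac{f_0(G_{n-m+1})}{f_0(G_n)} \le \frac{C(d+1)!}{c}\,(d+1)!^{-m}. $$

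Combining this count at $m=k-c_0$ with the degree--generation bound gives $\#\{\deg > c_2^{\,k}\}\le C''\,(d+1)!^{-k}\,|G_n|$, and taking $c_1 := 1/(d+1)!$, enlarged slightly (or with $k$ shifted by the constant $c_0$) to absorb $C''$, yields $\#\{\deg > c_2^{\,k}\}\le c_1^{\,k}|G_n|$ for all $k$ and $n$. This is the same base $c_1<1$ that controls the generation distribution in the preceding lemma, so the two constants can be harmonized.

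The bookkeeping of constants is routine; the genuine obstacle is the \emph{two-sided} control of the sphere growth \emph{uniformly in} $n$. The lower bound $|S_k(x)|\gtrsim d!^{k}$ requires that the unit sphere really is a $(d-1)$-dimensional complex so that its refinement grows at the full rate $d!$; vertices lying only in lower-dimensional parts of $G$ have slower-growing spheres, but these can only \emph{reduce} the high-degree count and so help the upper bound we need. The upper bound $|S_k(x)|\lesssim d!^{k}$ needs the normalized $f$-vectors of the iterated spheres to stay bounded away from degeneracy, i.e. an $n$-uniform version of Perron--Frobenius convergence for $A$. This is where I expect to spend most of the effort, invoking the spectral gap of $A$ (its subdominant eigenvalue is strictly below $(d+1)!$, and below $d!$ on the dimension-$(d-1)$ block) to obtain geometric convergence with constants independent of the refinement level.
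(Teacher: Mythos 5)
Your overall scheme --- high degree forces high generation, high generation is exponentially rare --- is exactly the argument the paper gestures at, but the step you treat as a warm-up is the one that fails. The claim that the unit sphere \emph{at birth} has size bounded by a constant $b_d$ depending only on $d$ is false for $d\geq 3$. The birth degree of the vertex of $G_{m+1}$ corresponding to a simplex $\sigma$ of $G_m$ is, up to the bounded count $2^{d+1}-2$ of faces of $\sigma$, the number of simplices of $G_m$ strictly containing $\sigma$, i.e.\ the simplex count of the link of $\sigma$ in $G_m$ --- and links of positive-dimensional simplices do \emph{not} stay uniformly bounded under refinement. Concretely, in a refined $3$-complex an edge of $G_m$ of the form $(\{v\}\subset e)$, with $v$ a vertex and $e$ an edge of $G_{m-1}$, has link equal to the barycentric refinement of ${\rm lk}_{G_{m-1}}(e)$; for a $3$-manifold these links are circles, and refinement doubles their length, so edges of $G_m$ lying along the original $1$-skeleton acquire on the order of $2^m$ cofaces. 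The corresponding \emph{generation-one} vertices of $G_{m+1}$ therefore have birth degree of order $2^m$, not $O(1)$, so the upper bound $\deg(x)\leq C_d\, d!^{\,\gamma(x)}$ is wrong and high degree does not force high generation. Note that the identity $S_{G_{k+1}}(x)=(S_{G_k}(x))_1$, which powers your post-birth growth estimate, only applies once $\sigma$ has become a vertex; it is the pre-birth history of $\sigma$'s link that you have lost control of. (Your stated worry --- $n$-uniform Perron--Frobenius convergence for the sphere refinements --- is legitimate but secondary; in dimension $2$ your scheme does go through, because links of edges and triangles in iterated refinements stay uniformly bounded, which is presumably why the failure is easy to miss.)

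The lemma itself survives by a much softer argument that sidesteps generations entirely: by Perron--Frobenius for the refinement operator $A$, the ratio $f_1(G_n)/f_0(G_n)$ converges, so the average degree $2f_1(G_n)/f_0(G_n)$ is bounded by some $C_d$ uniformly in $n$, and Markov's inequality gives $\#\{x: \deg(x)>c_2^{\,k}\}\leq C_d\,c_2^{-k}\,|G_n|$; choosing $c_2\geq C_d\,(d+1)!$ yields the stated bound with $c_1=1/(d+1)!$ for all $k\geq 1$. This costs you the structural degree--generation correspondence (and hence weakens what can be extracted downstream for the tail of $d\mu$), but it proves the statement as written. For comparison, the paper provides essentially no proof: it records only the one-sided lower bound $|S_k(x)|\sim d!^{\,k}$ for generation-$k$ vertices and asserts the lemma with $c_1=1/(d+1)!$, $c_2=d!$, i.e.\ it relies implicitly on the same two-sided correspondence you attempted, without addressing the birth-degree obstruction either.
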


\paragraph{}
It follows from the Gershrogin circle theorem 
that there less than $c_1^k |G_n|$ vertices for which
the largest eigenvalue is larger than $2 c_2^k$. Now $c_1=1/(d+1)!$ and
$c_2=d!$. Since this implies that the error area decays exponentially 
for $x \to 1$, was also have the function itself decay exponentially. 

\section{Questions}

\paragraph{}
Which graphs with $n$ vertices have the maximal tree-forest ratio? 
The minimum is $1$. For small $n$ we see that the linear graph is the
maximum. It is pretty safe to conjecture that the linear graph is
always the maximum:

\paragraph{}
What is the average tree-forest ratio on Erdoes-Renyi spaces? 
We can also look more generally at the potential average on 
Erdoes-Renyi spaces which produces a density of states measure
for each $n$ and $p$. And then we can ask for which choice of $p_n$
does the density of states average $dk_{n,p(n)}$ converge weakly 
in the limit $n \to \infty$.

\paragraph{}
What happens with graph operations like various products or joins? 
For disjoint unions, we have the product of determinants so that 
$\tau(G+H)=\tau(G) \tau(H)$. 
Under graph complements, there is no clear relation yet. Also
the Zykov join operation does not produce obvious relations.
What happens with the tree-forest ratio when taking Shannon products?
We do not see any (at least obvious) relation between the tree number
yet. 

\paragraph{}
For connection Laplacians, we know that the zeta functions multiply. 
because the connection Laplacian of a product of graphs tensor. 
$L_{G * H} = L_G \otimes L_H$. 
We therefore have for the connection Laplacians:
$$ \log(\rho(G * H)) = \sum_k (-1)^{k+1} \zeta_G(k) \zeta_H(k)/k \; . $$

\paragraph{}
What is the relation between the Hodge Laplacian and connection Laplacian 
tree-forest ratio? In order to define a tree-forest ratio from the connection
Laplacian itself, we have to look at the spectrum of the square.  
We can for any positive energized complex look at 
$\tau_L(G) = \det(L+1)/\det(L)$ which makes sense as $L$ is always invertible. 

\paragraph{}
We should reiterate that in the case $d \geq 2$, we have not even a proof
yeat that there are gaps in the spectrum. This should be easy to do as 
in the case $d=2$ we see a prominent gap $[4,6]$ associated to the integrated
density of states $1/2$. While we know now something about the decay of $dk$ at
infinity, we do not have an asymptotic near $0$ or the end of gaps. As for $0$,
the Schur estimate does not help as as we have always a certain percentage of 
vertices with minimal vertex degree $(d+1)!$. That means that we can (using Shur)
only prove that there exists a constant $c$ such that $dk([0,x]) \leq c x$ for 
small $x$. As we know $U(0) \geq 0$ due to its relation to the growth rate of trees,
we know that $U(z)$ is finite in a neighborhood of $z=0$. It is natural to conjecture
that the potential $U(z)< \infty$ for all $z \in \mathbb{Z}$. This would imply that the 
{\bf logarithmic capacity} $-\int \int \log|z-w| dk(z) dk(w)$ is finite in all 
dimensions $d$. We know that the value is $0$ in the case $d=0$, where
$U(z)=0$ on the support $[0,4]$ of $dk$. Having finite logarithmic capacity especially 
implies that the {\bf Lebesgue decomposition} of $dk=dk_{pp} + dk_{sc} + dk_{ac}$ 
has no pure point part. 

\paragraph{}
Fourier theory could be an approach to determine the nature of the spectrum $d\mu$.
Since the measure $\mu$ has exponential decay at infinity, all Fourier coefficients
$\hat{d\mu}(t) =\int_0^{\infty} e^{i t x} \; d\mu(x)$ exist. In order to determine
the spectral type, one could also focus on a compact part of the spectrum like 
for example, $[0,4]$. Now, we can look at Fourier series of $d\mu$ instead rather
than the Fourier transform. 
For example, by a theorem of Wiener \cite{Katznelson}, if 
$\frac{1}{n} \sum{k=0}^{n-1} \hat{\mu}(k)^2$ converges to zero for $n \to \infty$, 
then $d\mu$ has no point spectrum. 

\section{Code}

\paragraph{}
The following Mathematica code plots the potential function
(the analog of the Lyapunov exponent), and the integrated
density of states (the analog of the rotation number) as well as the spectrum of 
the Barycentric measure $dk$ in the planar case. The code is optimized
for the two-dimensional case so that we do not have to use general costly 
clique finding algorithms. 

\begin{tiny} \lstset{language=Mathematica} \lstset{frameround=fttt}
\begin{lstlisting}[frame=single]
T3[s_]:=Module[{v=VertexList[s]},Union[Partition[Flatten[Union[
  Table[z=EdgeList[Subgraph[s,Complement[VertexList[
  NeighborhoodGraph[s,v[[k]]]],{v[[k]]}]]]; Table[Sort[
  {v[[k]],z[[j,1]],z[[j,2]]}],{j,Length[z]}],{k,Length[v]}]]],3]]];
T2[s_]:=Module[{e=EdgeList[s]},
  Table[Sort[{e[[k,1]],e[[k,2]]}],{k,Length[e]}]];
T1[s_]:=Module[{v=VertexList[s]},Table[{v[[k]]},{k,Length[v]}]]; 
Whitney2D[s_]:=Sort[Map[Sort,Union[T3[s],T2[s],T1[s]]]];
GraphFromComplex[G_]:=Module[{n=Length[G],v,e={}},Do[x=Sort[G[[k]]]; 
  If[Length[x]==3,e=Union[e,{x->{x[[1]]},x->{x[[2]]},x->{x[[3]]},
  x->Sort[{x[[1]],x[[2]]}],x->Sort[{x[[1]],x[[3]]}],
  x->Sort[{x[[2]],x[[3]]}]}]];
  If[Length[x]==2,e=Union[e,{x->{x[[1]]},x->{x[[2]]}}]],{k,Length[G]}];
  rules=Table[G[[k]]->k,{k,n}]; UndirectedGraph[Graph[e /. rules]]];
Barycentric2D[s_]:=GraphFromComplex[Whitney2D[s]];
Barycentric2D[s_,k_]:=Module[{s1=s},Do[s1=Barycentric2D[s1],{k}];s1];
s=Barycentric2D[CompleteGraph[3],6];
L=Sort[Eigenvalues[1.0*KirchhoffMatrix[s]]];
U[z_]:=Sum[Log[Abs[z-L[[k]]]],{k,Length[L]}]/Length[L]; 
V[x_]:=Sum[If[L[[k]]<x,1,0],{k,Length[L]}]/Length[L];
f[x_]:={Red,Point[{x,0}]}; 
W = Graphics[Map[f,L], PlotRange->{{-1,10},{-1,7}}];
Show[{W,Plot[{3*U[x],6*V[x]},{x,-2,10},PlotPoints ->1000]}]
{TreeIndex=U[0],ForestIndex=U[-1],TreeForestIndex=TreeIndex/ForestIndex}
\end{lstlisting}
\end{tiny}

\paragraph{}

\begin{figure}[!htpb]
\scalebox{1.2}{\includegraphics{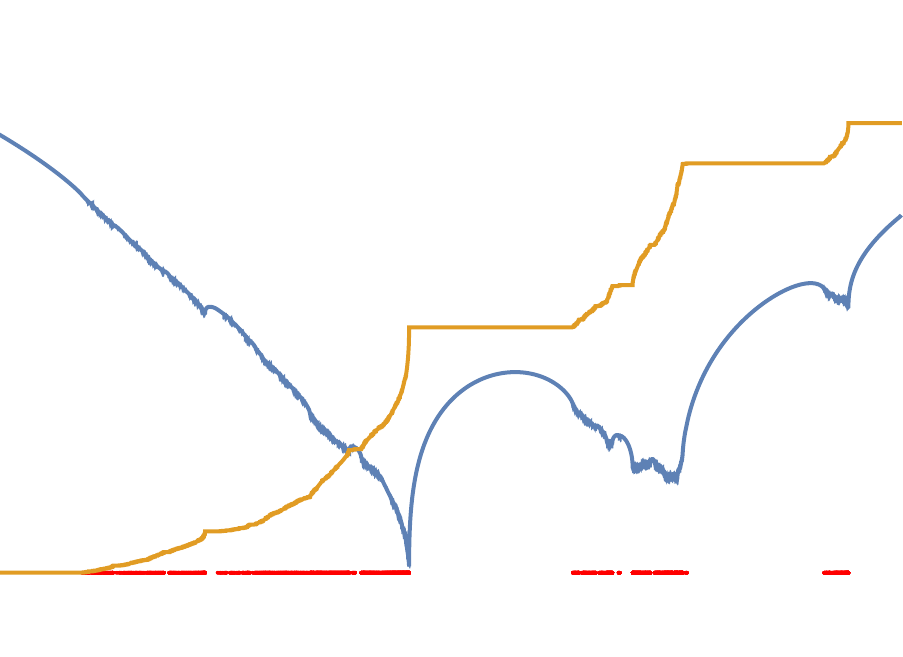}}
\label{potential}
\caption{
The output of the above code shows the potential ${\rm Re}(U(x+0i))$
and the integrated density of states ${\rm Im}(U(x+0i)$ of the 
complex potential $U(z)=\int_{\mathbb{C}} \log(z-w) \; dk(w)$
of the Barycentric 2-dimensional density of states $dk$. We 
see the 6th Barycentric refinement $G_6$ for $G=G_0=K_3$. 
The prominent gap $[4,6]$ is very clear numerically, 
but not theoretically established. 
}
\end{figure}

\begin{figure}[!htpb]
\scalebox{1.0}{\includegraphics{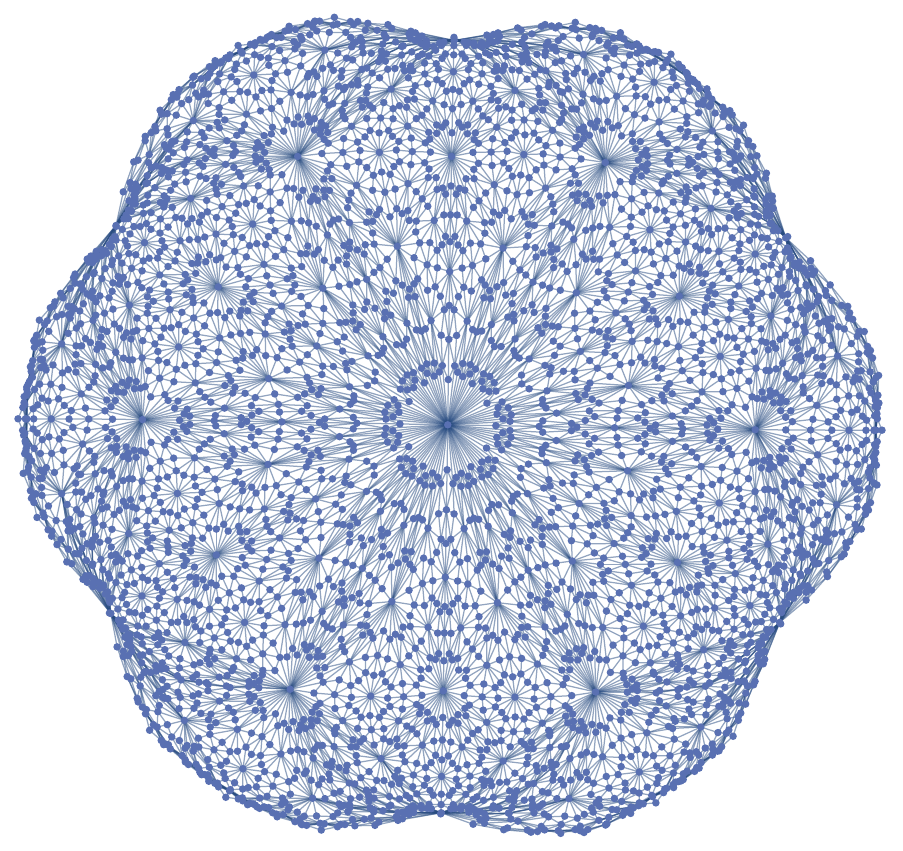}}
\label{potential}
\caption{
The 5'th Barycentric refinement $G_5$ of $G_0=K_3$ has 3937 vertices, 
11712 edges and 7776 triangles. The next version $G_6$ used in the above spectral 
picture already has $23425=3937+11712+7776$ vertices. 
}
\end{figure}

\bibliographystyle{plain}

\end{document}